\documentclass[10pt]{amsart}
\usepackage{indentfirst}
\usepackage{amssymb}
\usepackage{amsmath}
\usepackage{graphics}
\usepackage{epsfig}
\usepackage[usenames]{color}
\usepackage{MnSymbol,wasysym}
\usepackage[utf8]{inputenc}
\usepackage{latexsym}
\usepackage{graphicx}
\usepackage[normalem]{ulem}
\usepackage{hyperref}
\usepackage[T1]{fontenc}
\usepackage{kantlipsum}
\usepackage[usenames,dvipsnames]{xcolor}
\usepackage[breakable, theorems, skins]{tcolorbox}
\usepackage{ mathrsfs }
\usepackage[toc]{appendix}
\tcbset{enhanced}

\definecolor{light-gray}{gray}{0.95}

\catcode`@=11 \@addtoreset{equation}{section}
\renewcommand\theequation{\thesection.\@arabic\c@equation}
\catcode`@=12

\newcommand{\RR}{\mathbb{R}}
\newcommand{\PP}{\mathbb{P}}

\expandafter\chardef\csname pre amssym.def
at\endcsname=\the\catcode`\@ \catcode`\@=11
\def\undefine#1{\let#1\undefined}
\def\newsymbol#1#2#3#4#5{\let\next@\relax
 \ifnum#2=\@ne\let\next@\msafam@\else
 \ifnum#2=\tw@\let\next@\msbfam@\fi\fi
 \mathchardef#1="#3\next@#4#5}
\def\mathhexbox@#1#2#3{\relax
 \ifmmode\mathpalette{}{\m@th\mathchar"#1#2#3}%
 \else\leavevmode\hbox{$\m@th\mathchar"#1#2#3$}\fi}
\def\hexnumber@#1{\ifcase#1 0\or 1\or 2\or 3\or 4\or 5\or 6\or 7\or 8\or
 9\or A\or B\or C\or D\or E\or F\fi}

\font\teneufm=eufm10 \font\seveneufm=eufm7 \font\fiveeufm=eufm5
\newfam\eufmfam
\textfont\eufmfam=\teneufm \scriptfont\eufmfam=\seveneufm
\scriptscriptfont\eufmfam=\fiveeufm

\catcode`\@=\csname pre amssym.def at\endcsname

\definecolor{light-gray}{gray}{0.95}

\newcommand{\eqn}{\begin{eqnarray}}
\newcommand{\een}{\end{eqnarray}}

\newtheorem {Theorem}  {Theorem}

\numberwithin{Theorem}{section}

\newtheorem{Proposition}[Theorem]{Proposition}

\hypersetup{
  colorlinks   = true,    % Colours links instead of ugly boxes
  urlcolor     = blue,    % Colour for external hyperlinks
  linkcolor    = blue,    % Colour of internal links
  citecolor    = red      % Colour of citations
}

\newcommand{\ee}{\varepsilon}

\renewcommand{\a}{\alpha}

\renewcommand{\div}{\mbox{div}}

\begin{document}

\title[Asymptotic Behaviour of Oldroyd-B Fluids]{The Asymptotic Behaviour of Oldroyd-B Fluids is Almost Newtonian}

\author[M.  Hieber]{Matthias Hieber}
\address[M. Hieber]{Fachbereich Mathematik, TU Darmstadt, Schlossgartenstrasse 7, D-64289 Darmstadt, Germay} 
\email{hieber@mathematik.tu-darmstadt.de}

\author[T.H. Nguyen]{Thieu Huy Nguyen}
\address[T.H. Nguyen]{
Faculty of Mathematics and Informatics,
 Hanoi University of Science and Technology,
 Khoa Toan - Tin, Dai hoc Bach khoa Hanoi,
 1 Dai Co Viet,  Bach Mai, Hanoi, Vietnam}
\email{huy.nguyenthieu@hust.edu.vn}

\author[C. J. Niche]{C\'esar J. Niche}
\address[C.J. Niche]{Departamento de Matem\'atica Aplicada, Instituto de Matem\'atica. Universidade Federal do Rio de Janeiro, CEP 21941-909, Rio de Janeiro - RJ, Brazil}
\email{cniche@im.ufrj.br}

\author[C. F.  Perusato]{Cilon F.  Perusato}
\address[C.F.  Perusato]{Institut Camille Jordan, Université Lyon 1, 69622, Villeurbanne, France and Department of Mathematics,  Universidade Federal de Pernambuco,  Recife, PE 50740-560, Brazil}
\email{cilon.perusato@ufpe.br}

\keywords{Decay character, Oldroyd-B fluids, Newtonian fluids}

\subjclass{35Q35,76A10}

\begin{abstract} Consider a viscoelastic fluid of  Oldroyd-B type. It is  shown that its  stress tensor $\tau$ and its Newtonian deformation tensor $D(u)$ decay at the same rate,
 while the elastic part $\ee=\tau-2\omega D(u)$ decays faster. As a consequence, the stress tensor of a viscoelastic fluid exhibits an almost Newtonian behaviour for large times. 
   \end{abstract}

\maketitle

\section{Introduction}
\label{introduction}
The study of estimates for decay and of long time behaviour of solutions to equations arising in Fluid Mechanics has a long history. For example, there are many such results for the Navier-Stokes equations in $\mathbb{R} ^n$, starting with the work by Kato \cite{MR760047}, Masuda \cite{MR767409}, M.E. Schonbek \cite{MR775190, MR837929} and Wiegner \cite{MR881519}. In this article we consider decay estimates for solutions to the equations for  incompressible Oldroyd-B fluids  in $\RR^3$ describing  certain classes of viscoelastic fluids.
Given $\omega \in (0,1)$ and $a\in [-1,1]$, these read as 
\begin{equation}
\left\{
\begin{aligned}
Re \left(\partial_t u + (u \cdot \nabla) u \right) - (1 - \omega) \Delta u + \nabla P & = \div\,  \tau,  & \mbox{ in } \RR^3 \times (0,\infty)   \\
\div \,  u & = 0, & \mbox{ in } \RR^3 \times (0,\infty)  \\
We \left(\partial_t \tau + u \cdot \nabla \tau + g_a (\tau, \nabla u)   \right) + \tau & = 2 \omega \, D (u), & \mbox{ in } \RR^3 \times (0,\infty) \\
(u,  \tau) (0) & = \left( u_0, \tau_0\right) & \mbox{ in } \RR^3,   \\
\end{aligned}
\right. 
\label{eqn:oldroyd-b}
\end{equation}
where  $g_a (\tau, \nabla u) = \tau W(u) - W(u) \tau - a \left(D(u) \tau + \tau D(u) \right)$ and where $D(u) = \frac{1}{2} ( \nabla u + ( \nabla u)^T)$ denotes the deformation tensor and $W(u) = \frac{1}{2} ( \nabla u - ( \nabla u)^T)$ the vorticity tensor.  Here $u$ denotes the velocity of the fluid, $p$ its pressure, $\tau$ the symmetric constraint tensor and $Re$ and $We$ the associated  Reynolds and Weissenberg numbers, respectively. The nonlinear constitutive law for $\tau$ was proposed by Oldroyd \cite{MR94085} to describe certain classes of viscoelastic fluids.

The analysis of the above set of equations started with the pioneering paper by Guillop\'e and Saut \cite{MR1077577} in 1990, who proved local existence of strong solutions in suitable Sobolev spaces on bounded domains. This solution is global provided the data as well as the coupling constant $\omega$ are small enough.
The existence of a global weak solution in the case of $\RR^3$ was established by Lions and Masmoudi in \cite{MR1763488} for $a=0$. 
For further results in these  directions we refer to the works by Constantin and Kliegl \cite{MR2989441}, Fang and Zhi \cite{MR3473592},  Kupferman, Mangoubi and Titi \cite{MR2398006},
and Lei, Masmoudi and Zhou \cite{MR2558169}. Similar results, but for exterior domains, can be found in  Fang, Hieber and Zhi \cite{MR3096521}.
Properties of models in which either some dissipation or damping in \eqref{eqn:oldroyd-b} is not present have been studied by Constantin, Wu, Zhao and Zhi \cite{MR4350254},
Elgindi and Liu \cite{MR3349425}, Elgindi and Rousset \cite{MR3403757}, Wang, Wu, Xu and Zhong \cite{MR4348795} and Zhu \cite{MR3762094}. Finally, for a survey article also  
describing some physical aspects of the Oldroyd-B model, see Renardy and Thomases \cite{MR4266315}.

The main goal of this article is to provide  precise decay estimates for solutions $z = (u, \tau)$ to the Oldroyd-B model using the decay character of the initial data $u_0$ and $\tau_0$. Let us emphasize that our approach does not only allows us to improve known decay estimates for viscoelastic fluids, but also to prove the surprising fact that the asymptotic behaviour of
Oldroyd-B fluids is almost Newtonian. In fact,
we show that the full stress tensor and its Newtonian part decay at the same rate, while the viscoelastic part decays at a strictly faster rate. As a consequence, the full stress tensor and the
Newtonian deformation tensor become asymptotically aligned. 

We now describe the results mentioned above in a more precise way. In order to do so, we recall that to an initial datum $v_0 \in L^2 (\mathbb{R} ^n)$ we can associate a decay character $r^{\ast} (v_0)$ which is key for establishing sharp decay estimates for dissipative linear equations, see Section \ref{preliminaries} for details. In Theorem \ref{thm-a-equal-zero} we consider
the corotational case in \eqref{eqn:oldroyd-b}, i.e.  $a=0$, and prove decay estimates for weak solutions with arbitrary large initial data, of the form
$$
\Vert z(t) \Vert _{L^2}^2 \leq C     (1 + t) ^{ -\min \left\{ \frac{3}{2}, \frac{3}{2} +  \min \{  r^{\ast} \left(u_0 \right),  1 + r^{\ast} \left(\tau_0 \right)\, \}\, \right\}}, \quad  t>0,
$$
hereby improving previous results by Hieber, Wen and Zi \cite{MR3909068}  and Chen, Li, Yao and Yao \cite{MR4643473}.

Furthermore, our approach also allows us to improve known decay estimates for strong solutions for $a \ne 0$, albeit for small data. In Theorem \ref{general-a} we prove that for
any given $0 < \omega < 1$, there is $\epsilon = \epsilon (\omega)$  such that if $\Vert \left( u_0, \tau_0 \right) \Vert _{H^2} \leq \epsilon $
then for $k = 0, 1, 2$ and large times we have 
$$ 
\Vert \nabla^k u(t) \Vert _{L^2}^2 +  \Vert \nabla^k \tau(t) \Vert _{L^2}^2    \leq C    (1 + t) ^{- \left( k + \min \left\{ \frac{3}{2}, \frac{3}{2} +  \min \{  r^{\ast} \left(u_0 \right),  1 + r^{\ast} \left(\tau_0 \right) \} \right\} \right)},
$$
and for $k = 0,1$ 

$$  \Vert \nabla^k \tau(t) \Vert _{L^2}^2    \leq C    (1 + t) ^{- \left( k + 1 + \min \left\{ \frac{3}{2}, \frac{3}{2} +  \min \{  r^{\ast} \left(u_0 \right),  1 + r^{\ast} \left(\tau_0 \right) \} \right\} \right)}. 
$$
These results improve bounds obtained by  Hieber, Wen and Zi \cite{MR3909068}  and Chen, Li, Yao and Yao \cite{MR4643473}.

These estimates for $\tau$ and $Du$ lead us to the question of whether the elastic part $\ee$ of the stress tensor, i.e. $\ee = \tau  - 2 \omega D(u)$, allows for faster decay estimates than those. This is indeed the case. In fact, under the same assumptions as above, in Theorem \ref{thm:error} we prove that  
\begin{equation*}
  \Vert  \varepsilon(t) \Vert _{L^2}^2    \leq C     (1 + t) ^{- \left( 2 + \min \left\{\frac{3}{2}, \frac{3}{2} +  \min \{\,  r^{\ast} \left(u_0 \right),  1 + r^{\ast} \left(\tau_0 \right)\, \}\, \right\} \right)},
  \quad\,  t>0. 
\end{equation*}
This suggests that, as $\varepsilon$ decays faster than $\tau$ and $D(u)$, the latter would   tend to align in such a way  that the viscoelastic stress tensor asymptotically
matches the Newtonian deformation tensor. Note, however, that the above estimates  are just  upper bounds and do not preclude the possibility that these quantities decay faster than as described above. 

In Theorem \ref{thm:lower} we are able to prove that for certain classes of initial data we have lower bounds for $\tau$ and $D(u)$ that are equivalent to  their upper bounds. For example, for $( u_0,\tau_0 )$ satisfying $r^{\ast} (u_0) \leq 1 + r^{\ast} (\tau _0)$ and $r^{\ast} (u_0) \leq 0$, we obtain

\begin{displaymath}
  C_1    (1 + t) ^{- \left( \frac{5}{2}  + r^{\ast} \left(u_0 \right) \right)} \leq \Vert D(u) (t) \Vert _{L^2} ^2, \Vert \tau (t) \Vert _{L^2} ^2 \leq C_2    (1 + t) ^{- \left( \frac{5}{2}  + r^{\ast} \left(u_0
      \right) \right)}, \quad   t>1.
\end{displaymath}
The conditions on the decay characters of initial data imply that

\begin{equation*}
\Vert  \varepsilon(t) \Vert _{L^2}^2    \leq C     (1 + t) ^{- \left(\frac{7}{2} + r^{\ast} (u_0) \right)}, \quad\mbox{ for all } \, t>0. 
\end{equation*}
Hence, the elastic part $\varepsilon$ decays faster than both the full  stress tensor  $\tau$ and its Newtonian part $Du$. A similar result is valid under the conditions $1 + r^{\ast} (\tau _0) \leq r^{\ast} (u_0)$ and $1 + r^{\ast} (\tau _0) \leq 0$. This is all summarized in Theorem \ref{thm:main-theorem}. Thus, we have shown that the faster decay of the elastic part of the stress tensor implies that the viscoelastic fluid exhibits an almost Newtonian behavior for large times.

The above estimates for strong solutions  are similar to results by Guterres, Melo, Niche, Perusato and Zingano \cite{MR4844564} in the context of micropolar fluids. It is shown there that the vorticity and micro-rotation of a $3D$ micropolar flow tend to align  asymptotically. For related results, see also Shang \cite{MR4801878}.

A different kind of limit behaviour for viscoleastic fluids  has been obtained by Bresch and Prange \cite{MR3176320} and  Molinet and Talhouk \cite{MR2377290}. They
proved, assuming that the Weissenberg number $We$ goes to zero in \eqref{eqn:oldroyd-b}, that  solutions $u$ converge in appropriate spaces to solutions to the Navier-Stokes equations while  $\tau$
converges to $ 2 \omega D(u)$.

This article is organized as follows. In Section 2  we start by recalling  properties of the decay character of $v_0 \in L^2 (\mathbb{R}^n)$ and we then state our Theorems. In Section 3 we prove decay  results  for the linear part of \eqref{eqn:oldroyd-b}. In Section 4 we provide the proofs of Theorems \ref{thm-a-equal-zero} and \ref{general-a} concerning decay of
solutions and its derivatives and prove, in Theorems \ref{thm:error} and \ref{thm:lower}, the almost Newtonian behaviour of the Oldroyd-B flow.

\section{Preliminaries and Main Results}

\subsection{Preliminaries} \label{preliminaries} Let us start by recalling several  properties of the decay character of a function $v_0 \in L^2(\RR^n)$. This number describes the  ``algebraic order'' of $v_0$ near the origin,
comparing $ |\widehat{v_0} (\xi)|$ to $f(\xi) = |\xi|^{r}$ at $\xi = 0$, for some $r \in \RR$. This concept  was first introduced by  Bjorland and M.E. Schonbek in \cite{MR2493562} and refined later  by Niche and M.E. Schonbek \cite{MR3355116} and Brandolese \cite{MR3493117}.

For  $v_0 \in L^2(\RR^n)$, $r \in \left(- \frac{n}{2}, \infty\right)$ and a ball $B(\rho)$ at the origin with radius $\rho$, we consider

\begin{displaymath}
P_r(v_0):=\lim _{\rho \to 0} \rho ^{-2r-n} \int _{B(\rho)} \bigl |\widehat{v_0} (\xi) \bigr|^2 \, d \xi.    
\end{displaymath}
The decay character of $ v_0$, denoted by $r^{\ast} = r^{\ast}( v_0)$ is the unique  $r \in \left( -\frac{n}{2}, \infty \right)$
satisfying  $0 < P_r (v_0) < \infty$, provided this number exists. For a discussion of necessary and sufficient conditions for the existence of this limit, see Brandolese \cite{MR3493117} and Brandolese, Perusato and Zingano \cite{MR4700578}.

The decay character may be calculated for many functions. For example, given  $v_0 \in L^p (\RR^n) \cap L^2 _{\sigma} (\RR ^n)$ for $1 \leq p < 2$, we have that
$r^{\ast} (v_0) = - n ( 1 - \frac{1}{p})$, see e.g. \cite{MR3565380}.

For  initial data  $v_0 \in H^s (\RR^n)$, where $ s > 0$, there is a natural relation between the decay character of $\Lambda ^s \, v_0 := (- \Delta)^{\frac{s}{2}} v_0$ and that of $v_0$. Setting  $r^{\ast} _s (v_0) := r^{\ast} ( \Lambda^s v_0 )$, it follows from Niche and M.E. Schonbek \cite{MR3355116} that
$ \frac{n}{2} +s< r_s^{\ast}(v_0) < \infty $ and $r_s^{\ast}(v_0) = s + r^{\ast} (v_0)$, provided  $-\frac{n}{2} < r^{\ast} (v_0) < \infty$. 

Given a Hilbert space $X$ consider a linear pseudodifferential operator $\mathcal{L}: X^n \to \left( L^2 (\RR^n) \right) ^n$. Assume that $\mathcal{L}$ is diagonalizable, in the sense that
the associated symbol $ M(\xi)$ satisfies $M(\xi) = P^{-1} (\xi) D(\xi) P(\xi)$ for a.a. $\xi \in \RR^n$,  where $P(\xi) \in O(n)$ and $D$ satisfies $D(\xi) = - c_i |\xi|^{2\a} \delta _{ij}$ for $c_i > c>0$ and $0 < \a \leq 1$. An example of such operator is provided by the  fractional vectorial Laplacian.  If $v$ satisfies the equation $v_t = \mathcal{L} v$ for all $t>0$,  then 
$$
\frac{1}{2} \frac{d}{dt} \Vert v(t) \Vert _{L^2} ^2 \leq  - C  \int _{\RR^n} |\xi|^{2 \a} |\widehat{v}|^2 \, d \xi
$$
and the $L^2$-norm of $v$ decreases. Niche and  M.E. Schonbek  \cite{MR3355116} obtained  sharp upper and lower bounds for the norms of solutions to such systems. More precisely, let $v_0 \in L^2 (\RR^n)$ with  decay character $r^{\ast} (v_0) = r^{\ast}$ and  assume that $v$ is  a solution to  $v_t = \mathcal{L}v$ with initial data $v_0$. Then, if $- \frac{n}{2 } < r^{\ast}< \infty$, there exist constants $C_1, C_2> 0$ such that

\begin{equation}
\label{eqn:NS15}
C_1 (1 + t)^{- \frac{1}{\a} \left( \frac{n}{2} + r^{\ast} \right)} \leq \Vert v(t) \Vert _{L^2} ^2 \leq C_2 (1 + t)^{- \frac{1}{\a} \left( \frac{n}{2} + r^{\ast} \right)}. 
\end{equation}
Let us note here that the decay of solutions to the heat equation in $\mathbb{R} ^n$ is given by \eqref{eqn:NS15} with $\alpha = 1$. 

\subsection{Main results} Our first result concerns the decay of weak solutions in the case $a = 0$.  Note that existence of global weak solutions was proved in this case by Lions and Masmoudi \cite{MR1763488}, but their
uniqueness is not  known until today.

\begin{Theorem}\label{thm-a-equal-zero}
Let $a = 0$ and  $(u_0,  \tau_0) \in L^2_\sigma (\RR^3) \times L^2 (\RR^3)$, with $-\frac{3}{2} < r^{\ast} (u_0), r^{\ast}(\tau_0) < \infty $.  Then, for any weak solution
$z = (u, \tau)$ to \eqref{eqn:oldroyd-b}, there exists a constant $C>0$ such that
\begin{equation}\label{eqn:decay-theorem-a0}
\Vert z(t) \Vert _{L^2}^2 \leq C     (1 + t) ^{ -\min \left\{ \frac{3}{2}, \frac{3}{2} +  \min \{r^{\ast}(u_0),  1 + r^{\ast}(\tau_0) \} \right\}  },  \quad t>0.
\end{equation}
\end{Theorem}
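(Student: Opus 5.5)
We plan to run the Fourier splitting method of M.E. Schonbek, driven by decay estimates for the linearized system. For $a=0$ the nonlinear term $g_0(\tau,\nabla u)=\tau W(u)-W(u)\tau$ is orthogonal to $\tau$ in $L^2$, because $\mathrm{tr}\big((\tau W-W\tau)\tau\big)=0$ for symmetric $\tau$ and antisymmetric $W$. Multiplying the $u$-equation by $2\omega u/Re$ and the $\tau$-equation by $\tau/We$ (after normalizing constants so that the coupling terms $\langle \div\,\tau,u\rangle$ and $\langle 2\omega D(u),\tau\rangle$ cancel), we obtain for weak solutions the energy inequality
\[
\frac{d}{dt}\Big(\omega Re\Vert u\Vert_{L^2}^2+\tfrac{We}{2}\Vert\tau\Vert_{L^2}^2\Big)+2\omega(1-\omega)\Vert\nabla u\Vert_{L^2}^2+\Vert\tau\Vert_{L^2}^2\le 0 .
\]
This uses that transport terms vanish by incompressibility. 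For Lions--Masmoudi weak solutions we assume this inequality holds, as it does for limits of approximations. In particular $z\in L^\infty(L^2)$ and $\nabla u,\tau\in L^2(L^2)$.

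Next we apply Fourier splitting. The dissipation controls $|\xi|^2|\hat u|^2+|\hat\tau|^2\ge \min(|\xi|^2,1)\,|\hat z|^2$. Set $B(t)=\{|\xi|^2\le k/(1+t)\}$ with $k$ large. Write $E$ for the weighted energy. We get $E'+\frac{ck}{1+t}E\le \frac{Ck}{1+t}\int_{B(t)}|\hat z|^2\,d\xi$; for $t$ large we have $k/(1+t)\le 1$, so the $\tau$-damping dominates on $B(t)^c$. Multiplying by $(1+t)^{m}$ with $m$ large and integrating, it suffices to bound the low-frequency part $\int_{B(t)}|\hat z(\xi,t)|^2\,d\xi$.

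For the low frequencies we use Duhamel in Fourier space. Apply the Leray projector and write $\hat z(\xi,t)=e^{tM(\xi)}\hat z_0+\int_0^t e^{(t-s)M(\xi)}\hat N(\xi,s)\,ds$. Here $M(\xi)$ is the symbol of the linear coupled system $(u,\tau)\mapsto(-(1-\omega)|\xi|^2\hat u+i\mathbb P\xi\cdot\hat\tau,\ -\hat\tau+2\omega\, i\,\mathrm{sym}(\xi\otimes\hat u))$, up to constants. The linear part is where the decay character enters, via the Section 3 estimates. Near $\xi=0$, $M(\xi)$ has a slow eigenvalue $\approx-(1-\omega+\omega)|\xi|^2=-|\xi|^2$ (the effective Newtonian viscosity) and a fast one $\approx -1$. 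The slow mode carries $\hat u_0$ plus $O(|\xi|)\hat\tau_0$. Hence $\int_{B(t)}|e^{tM}\hat z_0|^2\,d\xi\lesssim (1+t)^{-(3/2+\min\{r^*(u_0),1+r^*(\tau_0)\})}+e^{-ct}$, computed exactly as in \eqref{eqn:NS15} with $\alpha=1$ and the decay characters of $u_0$ and of $\xi\hat\tau_0$. The fact that the $\tau_0$ contribution to the slow mode is weighted by $|\xi|$ is what produces the shift $1+r^*(\tau_0)$.

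For the nonlinear part we have $N=(-\mathbb P\,\div(u\otimes u),\,-\div(u\tau)-g_0(\tau,\nabla u))$. For $u\otimes u$ and $u\tau$ we use $|\hat N|\lesssim |\xi|\,\Vert z\Vert_{L^2}^2$, and for $g_0$ we use $|\hat g_0|\le\Vert\tau\Vert_{L^2}\Vert\nabla u\Vert_{L^2}$. The $g_0$ term has no $\xi$ factor, and this is the main obstacle. We would handle it through the structure of $e^{(t-s)M}$: forcing in the $\tau$-component projects onto the slow mode only with a factor $O(|\xi|)$ and is otherwise damped like $e^{-(t-s)}$. Granting that, the contribution is bounded by $\int_{B(t)}\big(|\xi|\int_0^t\Vert\tau\Vert\Vert\nabla u\Vert\,ds\big)^2d\xi\lesssim(1+t)^{-5/2}$, using the energy bound $\int_0^\infty\Vert\tau\Vert\Vert\nabla u\Vert\,ds<\infty$. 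The fast part is bounded by $\sup_s$ of an exponentially weighted integral, giving $(1+t)^{-3/2}\int_0^t e^{-(t-s)}\Vert\tau\Vert\Vert\nabla u\Vert\,ds$, which is integrable after Fourier splitting. The convective terms give $(1+t)^{-3/2}\big(\int_0^t|\xi|\Vert z\Vert^2\big)^2\lesssim (1+t)^{-5/2}t^2$ as a first pass. So we first obtain a crude rate such as $\Vert z(t)\Vert^2\lesssim \ln^{-1}(e+t)$, then bootstrap: reinserting the improved rate gives $(1+t)^{-3/2}$ for the nonlinear contribution. That is exactly the cap $\min\{3/2,\cdot\}$ in \eqref{eqn:decay-theorem-a0}.

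Combining the linear bound with this nonlinear bound in the Fourier splitting inequality yields \eqref{eqn:decay-theorem-a0}.
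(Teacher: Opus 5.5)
Your proposal is correct and is essentially the paper's proof. Both use the energy inequality, Fourier splitting on a ball of radius $\sim(1+t)^{-1/2}$, Duhamel's formula with the explicit Green's matrix of the linearized system (decay characters entering through the slow mode, with the extra factor $|\xi|$ on $\widehat{\tau_0}$), pointwise bounds on the Fourier transforms of the nonlinear terms, and a bootstrap up to the cap $\frac{3}{2}$. The one real difference is that you put $\widehat{\tau}$ on the low-frequency side, which creates your ``main obstacle'' with $g_0$ and forces the slow/fast analysis of the $\tau\to\tau$ block; the paper instead keeps $\Vert\tau\Vert_{L^2}^2$ entirely on the left (its damping is frequency-independent), so only $\int_{B(t)}|\widehat{u}|^2\,d\xi$ is needed and $g_a$ enters solely through $\mathcal{A}\,i\xi\,\mathbb{P}$, with the factor $|\xi|$ for free — while, conversely, your direct bound on $\bigl(\int_0^t|\xi|\,\Vert z(s)\Vert_{L^2}^2\,ds\bigr)^2$ is sharper than the paper's Cauchy--Schwarz step $t\int_0^t\Vert u(s)\Vert_{L^2}^4\,ds$, since it does not lose a factor of $t$.
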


Let us note that \eqref{eqn:decay-theorem-a0} improves some known decay estimates. In fact, if  $u_0 \in L^2_\sigma (\RR^3) \cap L^p (\RR^3)$ and $\tau_0 \in L^2 (\RR^3) \cap L^p (\RR^3)$,
it is shown for $p = 1$ in Hieber, Wen and Zi \cite{MR3909068},  and for $1 \leq p <2$ in Chen, Li, Yao and Yao \cite{MR4643473} , that 
\begin{equation*}
    \Vert  z(t) \Vert _{L^2} ^2 \leq C (1+t) ^{-\frac{3}{2}\left(\frac{2}{p} - 1\right)}.
\end{equation*}
From Section \ref{preliminaries} we know that $r^{\ast} (f) = - 3 ( 1 - \frac{1}{p})$ for $f \in L^2 (\RR^3) \cap L^p (\RR^3)$ and $1 \leq p < 2$. Hence, our Theorem \ref{thm-a-equal-zero} covers
in particular the above estimate.

Our second result deals with the decay of strong solutions in the general case  $a \in [-1,1]$, for small enough initial data. For the existence of such global, strong solutions, we refer to Guillop\'e and Saut \cite{MR1077577}.

\begin{Theorem}\label{general-a}
Let $a \in [-1,1]$ and $(u_0,\tau_0) \in H^2 _{\sigma} (\RR^3) \times H^2 (\RR^3)$ satisfying  $- \frac{3}{2} < r^{\ast} (u_0), r^{\ast}(\tau_0) < \infty$. Then, for $k=0,1,2$ there exist
$\varepsilon = \varepsilon(\omega) >0$ and $T = T(\omega)$ such that if $\Vert ( u_0, \tau_0) \Vert _{H^2} \leq \varepsilon $, there exists a constant $C>0$ such that   
\begin{equation} \label{eqn:higher-derivatives} 
\Vert \nabla^k u(t) \Vert _{L^2}^2 +  \Vert \nabla^k \tau(t) \Vert _{L^2}^2    \leq C    (1 + t) ^{- \left( k + \min \left\{ \frac{3}{2}, \frac{3}{2} +  \min \{  r^{\ast} \left(u_0 \right),  1 + r^{\ast} \left(\tau_0 \right) \} \right\} \right)} ,\quad  t>T.
\end{equation}
Moreover, for $j=0,1$ there exists a constant $C>0$ such that
\begin{equation}\label{eqn:high-deriv-tau}
\Vert \nabla^j \tau(t) \Vert _{L^2}^2   %\frac{1}{2} 
%\Vert \tau (t) \Vert _{L^2} ^2   
\leq C  (1 + t) ^{-  \left( j+1 + \min \left\{ \frac{3}{2}, \frac{3}{2} +  \min \{  r^{\ast} \left(u_0 \right),  1 + r^{\ast} \left(\tau_0 \right) \} \right\} \right)}, \quad  t>T.
\end{equation}
\end{Theorem}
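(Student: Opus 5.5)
Set $\beta := \min\{\frac{3}{2}, \frac{3}{2} + \min\{r^{\ast}(u_0), 1 + r^{\ast}(\tau_0)\}\}$. The plan is a Fourier splitting argument built on the linear decay of Section 3, applied to energy inequalities for $\nabla^k z$, $k=0,1,2$. We use the Guillop\'e--Saut small-data global solution, with $\sup_t \Vert z(t)\Vert_{H^2}\le C\varepsilon$.

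\emph{Step 1 (dissipative energy identities).} The linear part has no diffusion in $\tau$; its dissipation is $(1-\omega)\Vert\nabla u\Vert^2+\frac{1}{We}\Vert\tau\Vert^2$. To control the coupling terms we weight the equations: take the $L^2$ inner product of the $u$-equation with $2\omega\,\nabla^k u$ and of the $\tau$-equation with $\nabla^k\tau$. Then $\langle \div\,\tau,u\rangle$ cancels against $-2\omega\langle D(u),\tau\rangle$. This gives
\begin{equation*}
\frac{d}{dt}\left(\omega Re\Vert\nabla^k u\Vert^2+\tfrac{We}{2}\Vert\nabla^k\tau\Vert^2\right)+2\omega(1-\omega)\Vert\nabla^{k+1}u\Vert^2+\Vert\nabla^k\tau\Vert^2\le N_k .
\end{equation*}
The nonlinear terms $N_k$ come from $u\cdot\nabla u$, $u\cdot\nabla\tau$ and $g_a$. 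We bound them by commutator and Sobolev estimates, e.g.\ $|\langle\nabla^k g_a,\nabla^k\tau\rangle|\lesssim\Vert z\Vert_{H^2}(\Vert\nabla^{k+1}u\Vert^2+\Vert\nabla^k\tau\Vert^2)$. Smallness lets us absorb them into the left side.

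\emph{Step 2 (dissipation in $\tau$ at high frequencies).} The $\tau$-dissipation is of zeroth order, so it dominates $|\xi|^2|\hat\tau|^2$ only on bounded frequencies. On a ball $B(\rho(t))$ with $\rho(t)^2=m/(1+t)$ we have $\Vert\nabla^k\tau\Vert^2\ge\rho^2\Vert\nabla^k\tau\Vert^2$. We therefore get $\frac{d}{dt}E_k+c\rho^2E_k\le c\rho^2\int_{B(\rho)}|\xi|^{2k}|\hat z|^2\,d\xi$.

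\emph{Step 3 (low frequencies).} We write $z$ via Duhamel with the linear semigroup of Section 3. Its low-frequency part decays like $(1+t)^{-\beta}$: $u$ is heat-like with decay character $r^{\ast}(u_0)$, while $\tau$ is damped and feeds $u$ through $\div\,\tau$, which gains one power of $|\xi|$. This is the source of the $1+r^{\ast}(\tau_0)$ term. The nonlinear contributions have Fourier transforms bounded near $0$ by $|\xi|\Vert z\Vert_{L^2}\Vert z\Vert_{H^1}$, using the divergence form of $u\cdot\nabla u$ and $u\cdot\nabla\tau$. The $g_a$ term is not in divergence form, but in the $\tau$-equation it is damped at rate $1/We$, and it enters $u$ only through $\div$. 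This bounds the nonlinear part by $(1+t)^{-3/2}$, which explains the cap $\frac{3}{2}$ in $\beta$. Integrating over $B(\rho)$ with the factor $|\xi|^{2k}\le\rho^{2k}$ gives $\int_{B(\rho)}|\xi|^{2k}|\hat z|^2\lesssim(1+t)^{-k-\beta}$. Gronwall with integrating factor $(1+t)^{m}$, $m$ large, then yields \eqref{eqn:higher-derivatives} for $t>T$. We proceed by first obtaining a crude rate at $k=0$ and bootstrapping.

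\emph{Step 4 (the extra decay of $\tau$).} From the $\tau$-equation,
\begin{equation*}
We\,\partial_t\tau+\tau=2\omega D(u)-We(u\cdot\nabla\tau+g_a).
\end{equation*}
For $j=0,1$ we take the inner product with $\nabla^j\tau$ and get
\begin{equation*}
\frac{We}{2}\frac{d}{dt}\Vert\nabla^j\tau\Vert^2+\tfrac12\Vert\nabla^j\tau\Vert^2\le C\Vert\nabla^{j+1}u\Vert^2+\text{small}\cdot\Vert\nabla^j\tau\Vert^2 .
\end{equation*}
By Step 3 the forcing decays like $(1+t)^{-(j+1+\beta)}$; the $\nabla^2 u$ bound with $k=2$ is what is needed for $j=1$. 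Because the damping is exponential, an ODE comparison gives \eqref{eqn:high-deriv-tau}.

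The main obstacle is Step 3 for the non-divergence term $g_a(\tau,\nabla u)$: we must show that its low-frequency contribution does not spoil the $(1+t)^{-3/2}$ bound. I would first try placing it in the $\tau$ Duhamel formula and exploiting that the damped $\tau$-mode and the $|\xi|$ factor from $\div\,\tau$ combine to produce the needed gain.
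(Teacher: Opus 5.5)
Your outline is correct, but the energy part takes a different route from the paper.

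\textbf{Comparison with the paper.} The paper does not derive level-$k$ energy identities itself. It imports from Hieber--Wen--Zi the functionals $F_1,F_2$ (containing $\Vert\partial_t u\Vert_{L^2}^2$, $\Vert\partial_t\tau\Vert_{L^2}^2$ and $\Vert\mathrm{curl}\,\div\,\tau\Vert_{L^2}^2$), $G$ and $H$. The inequalities for $G$ and $H$ keep unabsorbed remainders $\Vert\nabla u\Vert_{L^2}^6$ and $\Vert\nabla^2u\Vert_{L^2}^{10/3}$, so an extra bootstrap is needed at $k=1,2$. Your weighted identities absorb all nonlinear terms under smallness; the $2\omega$-weight makes $\langle\nabla^k\div\,\tau,\nabla^k u\rangle$ cancel against $\langle\nabla^k D(u),\nabla^k\tau\rangle$. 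This gives a clean differential inequality, so for $k=1,2$ a single Fourier-splitting step using $\int_{B(\rho)}|\xi|^{2k}|\widehat u|^2\,d\xi\le\rho^{2k}\Vert u\Vert_{L^2}^2$ suffices. Your route is self-contained and more elementary, at the cost of doing the commutator estimates yourself. Your Step 3 (Duhamel with $\mathcal{A},\mathcal{B}$ on the low-frequency ball, bootstrapped from a crude rate) and Step 4 (energy estimate for the damped $\tau$-equation forced by $\Vert\nabla^{j+1}u\Vert_{L^2}^2$) are essentially what the paper does.

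\textbf{Corrections.} The bound you assert, $|\langle\nabla^k g_a,\nabla^k\tau\rangle|\lesssim\Vert z\Vert_{H^2}(\Vert\nabla^{k+1}u\Vert_{L^2}^2+\Vert\nabla^k\tau\Vert_{L^2}^2)$, fails for $k=1$.
\begin{itemize}
\item Terms of the form $\int(\nabla u)(\nabla\tau)(\nabla\tau)$, coming from $\nabla g_a$ and from the commutator of $\nabla$ with $u\cdot\nabla$, need $\Vert\nabla u\Vert_{L^\infty}$ or $\Vert\nabla\tau\Vert_{L^6}\lesssim\Vert\nabla^2\tau\Vert_{L^2}$.
\item Since $\tau$ is not smoothed, neither quantity is controlled by the level-one dissipation $\Vert\nabla^2u\Vert_{L^2}^2+\Vert\nabla\tau\Vert_{L^2}^2$.
\item The fix is to run the $k=1$ estimate for $\Vert\nabla z\Vert_{L^2}^2+\Vert\nabla^2 z\Vert_{L^2}^2$ jointly.
\item At $k=2$ alone the estimate does close, e.g.\ via $\Vert\nabla u\Vert_{L^\infty}\lesssim\Vert\nabla^2u\Vert_{L^2}^{1/2}\Vert\nabla^3u\Vert_{L^2}^{1/2}$ and Young's inequality.
\end{itemize}
Likewise, in Step 4 with $j=1$ this term is not ``small times $\Vert\nabla\tau\Vert_{L^2}^2$''. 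It is, however, bounded by products of rates you have already proved, which decay faster than $(1+t)^{-(2+\beta)}$.

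\textbf{The flagged obstacle.} The difficulty you flag in Step 3 is not a real one.
\begin{itemize}
\item Because $\tau$ is damped at every frequency, only $\widehat u$, not $\widehat z$, is needed on $B(\rho)$.
\item $g_a$ enters $\widehat u$ only through $\mathcal{A}(\xi,t-s)\,i\xi_l(\delta_{jk}-\xi_j\xi_k/|\xi|^2)\widehat{g_a}$, which already carries a factor $|\xi|$.
\item So the crude bound $|\widehat{g_a}(\xi,s)|\le\Vert g_a(s)\Vert_{L^1}\lesssim\Vert\tau(s)\Vert_{L^2}\Vert\nabla u(s)\Vert_{L^2}$ is enough, and the right-hand side is time-integrable by the energy inequality.
\end{itemize}
This is exactly how the paper handles $g_a$.
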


We note that Theorem \ref{general-a} generalizes  previous results in  Hieber, Wen and Zi \cite{MR3909068} and  Chen, Li, Yao and Yao \cite{MR4643473}, where for  $u_0 \in H^2_\sigma (\RR^3) \cap L^1 (\RR^3)$ and $\tau_0
\in H^2 (\RR^3) \cap L^1 (\RR^3)$ it was shown that for $k=0,1,2$ 

\begin{displaymath}
    \Vert \nabla^k u(t) \Vert _{L^2}^2 +  \Vert \nabla^k \tau(t) \Vert _{L^2}^2    \leq C    (1 + t) ^{- \left( k + \frac{3}{2} \right)}, \quad t>0. 
\end{displaymath}

We now address our main result, in which we consider the elastic part $\ee$ of the stress tensor given by the difference between $\tau$  and the Newtonian part $2 \omega D(u)$, i.e. we set 
$$
\ee = \tau  - 2 \omega D(u).
$$
Estimates \eqref{eqn:higher-derivatives} and \eqref{eqn:high-deriv-tau} in Theorem \ref{general-a} imply that both $\tau$ and $D(u)$ obey the same decay estimates. The next result says that there is even a  faster decay rate for $\ee$.  

\begin{Theorem}\label{thm:error}
Under the assumptions in Theorem \ref{general-a}, there is a constant $C>0$ such that    

\begin{equation}\label{eqn:epsilon-estimates}
  \Vert  \varepsilon(t) \Vert _{L^2}^2    \leq C     (1 + t) ^{- \left( 2 + \min \left\{\frac{3}{2}, \frac{3}{2} +  \min \{\,  r^{\ast} \left(u_0 \right),  1 + r^{\ast} \left(\tau_0
        \right)\, \}\, \right\} \right)}, \quad t>T.  
\end{equation}
\end{Theorem}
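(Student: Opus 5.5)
Write $\beta := \min\{\frac32, \frac32 + \min\{r^{\ast}(u_0), 1+r^{\ast}(\tau_0)\}\}$. The plan is to derive an evolution equation for $\ee$ that has linear damping, and then to estimate $\ee$ by a Duhamel/ODE argument, feeding in the decay rates from Theorem \ref{general-a}.

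\textbf{Step 1: the equation for $\ee$.} Substitute $\tau = \ee + 2\omega D(u)$ into the third equation of \eqref{eqn:oldroyd-b}. The term $\tau - 2\omega D(u)$ becomes exactly $\ee$, so
$$
We\,\partial_t \ee + \ee = -We\bigl( 2\omega\,\partial_t D(u) + u\cdot\nabla \tau + g_a(\tau,\nabla u)\bigr).
$$
Next use the momentum equation to rewrite $\partial_t u$:
$$
Re\,\partial_t u = (1-\omega)\Delta u - \nabla P + \div\,\tau - Re\,(u\cdot\nabla)u .
$$
Hence $\partial_t D(u)$ is a combination of second derivatives of $\nabla u$, of $\nabla^2 P$, of $\nabla\div\,\tau$, and of $\nabla((u\cdot\nabla)u)$. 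The pressure is recovered from $-\Delta P = \div\div(Re\,u\otimes u - \tau)$, so $\nabla^2 P$ is a Riesz transform of $\nabla\div\,\tau$ plus a quadratic term. Altogether
$$
\partial_t \ee + \tfrac{1}{We}\ee = F_1 + F_2 ,
$$
where $F_1$ is linear and consists of third derivatives of $u$ and second derivatives of $\tau$ (composed with zero-order Riesz operators), and $F_2$ is quadratic: $u\cdot\nabla\tau$, $g_a(\tau,\nabla u)$, and $\nabla\mathbb P\div(u\otimes u)$.

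\textbf{Step 2: the $L^2$ energy inequality.} Take the $L^2$ inner product with $\ee$ and use Young's inequality:
$$
\frac{d}{dt}\Vert\ee\Vert_{L^2}^2 + \tfrac{1}{We}\Vert\ee\Vert_{L^2}^2 \le C\bigl(\Vert\nabla^3 u\Vert_{L^2}^2 + \Vert\nabla^2\tau\Vert_{L^2}^2 + \Vert F_2\Vert_{L^2}^2\bigr).
$$
Gronwall's inequality with the exponential factor then gives
$$
\Vert\ee(t)\Vert_{L^2}^2 \le e^{-t/We}\Vert\ee(T)\Vert_{L^2}^2 + C\int_T^t e^{-(t-s)/We}\,G(s)\,ds .
$$
So it suffices to show that the forcing satisfies $G(s) \lesssim (1+s)^{-(2+\beta)}$; the exponential kernel then transfers this rate to $\ee$.

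For the quadratic terms this is straightforward. Use Hölder's inequality and the Sobolev/Gagliardo–Nirenberg bounds $\Vert u\Vert_{L^\infty} \lesssim \Vert\nabla u\Vert^{1/2}\Vert\nabla^2 u\Vert^{1/2}$ and $\Vert\nabla u\Vert_{L^3} \lesssim \Vert\nabla u\Vert^{1/2}\Vert\nabla^2u\Vert^{1/2}$, together with the rates \eqref{eqn:higher-derivatives}–\eqref{eqn:high-deriv-tau}. These give $\Vert F_2\Vert_{L^2}^2 \lesssim (1+t)^{-(2\beta + 3)}$, which is at least $(1+t)^{-(2+\beta)}$ because $\beta > 0$.

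\textbf{Step 3: the main obstacle, the linear term $F_1$.} Theorem \ref{general-a} bounds only two derivatives. It gives $\Vert\nabla^2\tau\Vert^2 \lesssim (1+t)^{-(2+\beta)}$, which is exactly what we need. But $\nabla^3 u$ is not controlled by an $H^2$ theory.

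To avoid it, I would not estimate $\partial_t D(u)$ through third derivatives. Instead I would work at the level of the Fourier symbol of the linearized system, where the linear operator in $(u,\tau)$ (after the Leray projection) is an explicit $2\times2$ block matrix.
\begin{itemize}
\item For $|\xi|$ small, the linear combination $\widehat\tau - 2\omega\,\widehat{D(u)}$ lies along the fast, damped eigendirection, up to an error of order $|\xi|^2$ times $(\widehat u,\widehat\tau)$.
\item The slow mode decays like $e^{-c|\xi|^2 t}$, so this error contributes $\int |\xi|^4 e^{-c|\xi|^2t}|\widehat z_0|^2\,d\xi \lesssim (1+t)^{-(2+\beta)}$, using the decay character as in \eqref{eqn:NS15}.
\item High frequencies decay exponentially.
\end{itemize}
This yields the linear estimate for $\ee$.

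The nonlinear part is then handled through Duhamel's formula with the same symbol bounds, using the $F_2$-type estimates from Step 2. Alternatively, I would keep the energy approach but place $\Delta u$ in a weak form: the combination $(1-\omega)\Delta D(u) + \nabla\div\,\tau$ is paired against $\ee$ and integrated by parts once, which moves one derivative onto $\ee$. That requires a companion estimate for $\nabla\ee$, with the forcing closed using $\nabla^2$ bounds.

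I expect this derivative bookkeeping to be the hardest part of the proof.
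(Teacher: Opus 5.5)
Your Step 2 as written cannot close, because $\Vert\nabla^3u\Vert_{L^2}$ is not controlled; you identify this correctly. Your second remedy, however, is exactly the paper's argument, and it closes immediately. No additional estimate for $\nabla\varepsilon$ is needed: with your $\beta$, the triangle inequality together with \eqref{eqn:high-deriv-tau} for $j=1$ and \eqref{eqn:higher-derivatives} for $k=2$ gives
\[
\Vert\nabla\varepsilon(t)\Vert_{L^2}\le\Vert\nabla\tau(t)\Vert_{L^2}+2\omega\Vert\nabla^2u(t)\Vert_{L^2}\le C(1+t)^{-\frac{2+\beta}{2}},\qquad t>T.
\]
Hence $|\langle\Delta D(u),\varepsilon\rangle|=|\langle\nabla D(u),\nabla\varepsilon\rangle|\le\Vert\nabla^2u\Vert_{L^2}\Vert\nabla\varepsilon\Vert_{L^2}\le C(1+t)^{-(2+\beta)}$, which is all the damped ODE requires. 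Your treatment of the second-order $\tau$ terms through $\Vert\nabla^2\tau\Vert_{L^2}$, and of $F_2$, is fine.

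Do not try to obtain the $\nabla\varepsilon$ bound from an energy estimate for the differentiated $\varepsilon$-equation. That would involve $\nabla^3\tau$ and $\nabla^4u$, which the $H^2$ theory does not control.

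The paper organizes the same computation slightly differently. It substitutes $2\omega(1-\omega)\Delta D(u)=(1-\omega)\Delta(\tau-\varepsilon)$ to get
\[
\partial_t\varepsilon+\varepsilon-(1-\omega)\Delta\varepsilon=-(u\cdot\nabla)\tau-g_a(\tau,\nabla u)-2\omega D\mathbb{P}\div\,\tau+2\omega D\mathbb{P}(u\cdot\nabla)u-(1-\omega)\Delta\tau .
\]
It then pairs with $\varepsilon$ and integrates by parts once in the last three terms. Each resulting product, e.g. $\Vert\nabla\varepsilon\Vert_{L^2}\Vert\nabla\tau\Vert_{L^2}$, is bounded by $C(1+t)^{-(2+\beta)}$ exactly as above, and the dissipation $(1-\omega)\Vert\nabla\varepsilon\Vert_{L^2}^2$ is simply dropped.

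The Fourier-side route you lead with is genuinely different and much heavier, and as sketched it is not yet a proof. The nonlinear Duhamel contribution is only asserted. The linear bound $\int|\xi|^4e^{-c|\xi|^2t}|\widehat{z_0}(\xi)|^2\,d\xi\le C(1+t)^{-(2+\beta)}$ fails for the $\tau_0$-component whenever $r^\ast(\tau_0)<\min\{0,r^\ast(u_0)\}$, since that term only decays like $(1+t)^{-(\frac72+r^\ast(\tau_0))}$. To fix it you would need the extra power of $|\xi|$ with which $\tau_0$ enters the slow mode (the factor $\xi_l\mathcal{A}$ in Proposition \ref{zi-fang-zhang} and the bound on $\mathcal{C}$ in Proposition \ref{hieber-wen-zi}). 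Carried out carefully, the spectral analysis gives sharper information on the linear part of $\varepsilon$. For the theorem as stated, though, the energy argument above is all that is needed.
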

Estimate \eqref{eqn:epsilon-estimates} suggests that as $\varepsilon$ decays faster than $\tau$ and $D(u)$, these may  tend to align in such a way that the viscoelastic stress tensor
asymptotically  matches the Newtonian one. However, note that \eqref{eqn:higher-derivatives} and \eqref{eqn:high-deriv-tau} provide only  upper bounds, which do not preclude the possibility of these quantities having
a faster decay than \eqref{eqn:epsilon-estimates}.

We show next that for certain initial data $( u_0,  \tau_0 )$ it is possible to obtain upper as well as  lower bounds for $Du$ and $\tau$ with the same decay rate.

\begin{Theorem} \label{thm:lower} Let $r^{\ast} (u_0) \leq 1 + r^{\ast} (\tau _0)$ and $r^{\ast} (u_0) \leq 0$. Then there exists a constant $C>0$ such that 
\begin{equation}
\label{eqn:lower-bounds-for-final-estimate-first-case}
\Vert D(u) (t) \Vert _{L^2} ^2, \Vert \tau (t) \Vert _{L^2} ^2 \geq C    (1 + t) ^{- \left( \frac{5}{2}  + r^{\ast} \left(u_0 \right) \right)}, \quad t>1.
\end{equation}
On the other hand, if $1 + r^{\ast} (\tau _0) \leq r^{\ast} (u_0)$ and $1 + r^{\ast} (\tau _0) \leq 0$, then
\begin{equation}
\label{eqn:lower-bounds-for-final-estimate-second-case}
\Vert D(u) (t) \Vert _{L^2} ^2, \Vert \tau (t) \Vert _{L^2} ^2 \geq C    (1 + t) ^{- \left( \frac{7}{2}  + r^{\ast} \left(\tau_0 \right) \right)}, \quad \, t>1.
\end{equation}
    
\end{Theorem}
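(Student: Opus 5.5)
We prove the lower bounds by comparing the nonlinear solution with the solution of the linearized system, and then using the lower bound in \eqref{eqn:NS15} for the linear part. Write $z=(u,\tau)$ and let $\bar z=(\bar u,\bar\tau)$ solve the linear part of \eqref{eqn:oldroyd-b} with the same data:
\begin{equation*}
Re\,\partial_t \bar u-(1-\omega)\Delta \bar u+\nabla \bar P=\div\,\bar\tau,\qquad We\,\partial_t\bar\tau+\bar\tau=2\omega D(\bar u).
\end{equation*}
The Fourier symbol of this system can be analysed explicitly, as in Section 3. After the Leray projection and the splitting $\bar\tau=\mathbb{P}\div\bar\tau$ plus a remainder, the low-frequency part has two kinds of eigenvalues. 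One is a parabolic branch $\lambda_1(\xi)\sim -c|\xi|^2$. The other is a damped branch $\lambda_2(\xi)\sim -1/We$.

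On the parabolic branch, for $|\xi|$ small, $\widehat{\bar\tau}$ is slaved to $\widehat{\bar u}$:
\begin{equation*}
\widehat{\bar\tau}\approx 2\omega\, \widehat{D(\bar u)},
\end{equation*}
up to $O(|\xi|^2)$ corrections. The projection of the data onto this branch is, roughly, $\widehat{u_0}+c\,\xi\cdot\widehat{\tau_0}/(\ldots)$. Its decay character is therefore $\min\{r^\ast(u_0),1+r^\ast(\tau_0)\}$, provided no cancellation occurs when the two characters coincide. In the case $r^\ast(u_0)\le 1+r^\ast(\tau_0)$ this is $r^\ast(u_0)$. I would invoke \eqref{eqn:NS15} with $\alpha=1$ and $s=1$, using $r^\ast_1=1+r^\ast$, to get
\begin{equation*}
\Vert D(\bar u)(t)\Vert_{L^2}^2\ge C(1+t)^{-(5/2+r^\ast(u_0))}.
\end{equation*}
The damped branch decays exponentially, so it does not affect this bound. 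The relation $\widehat{\bar\tau}\approx 2\omega\widehat{D\bar u}$ at low frequency gives the same lower bound for $\bar\tau$. The second case is identical, with $r^\ast(u_0)$ replaced by $1+r^\ast(\tau_0)$, which yields the rate $7/2+r^\ast(\tau_0)$.

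Next we show that the nonlinear remainder $w=z-\bar z$ decays strictly faster in the relevant norms. Duhamel's formula gives $w(t)=\int_0^t e^{(t-s)\mathcal L}N(s)\,ds$. The nonlinearities are $N=(-\mathbb{P}(u\cdot\nabla u),\,-u\cdot\nabla\tau-g_a(\tau,\nabla u))$, and each is quadratic. For the velocity part we use the divergence form $u\cdot\nabla u=\div(u\otimes u)$, so that
\begin{equation*}
|\widehat{N}(\xi)|\le |\xi|\,\Vert u\Vert_{L^2}^2+\Vert\tau\Vert_{L^2}\Vert\nabla u\Vert_{L^2}+\ldots
\end{equation*}
We then use the Fourier splitting technique together with the upper bounds \eqref{eqn:higher-derivatives} and \eqref{eqn:high-deriv-tau} of Theorem \ref{general-a}, which hold with the rate $3/2+r^\ast(u_0)$ because $r^\ast(u_0)\le 0$. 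The aim is to estimate $\Vert\nabla w(t)\Vert_{L^2}^2$ and $\Vert w_\tau(t)\Vert_{L^2}^2$ by $C(1+t)^{-\gamma}$ with $\gamma>5/2+r^\ast(u_0)$. For example, the quadratic terms gain a factor $(1+t)^{-3/2}$ from the product of two decaying norms. Their low-frequency $L^2$ contribution behaves like $(1+t)^{-5/2}\cdot(\text{extra decay})$. The assumption $r^\ast(u_0)\le 0$ is exactly what is needed here, since it ensures that the linear rate $5/2+r^\ast(u_0)$ is at most $5/2$, while the nonlinear rate is strictly larger. The triangle inequality then gives
\begin{equation*}
\Vert D(u)\Vert_{L^2}\ge\Vert D(\bar u)\Vert_{L^2}-\Vert D(w)\Vert_{L^2}\ge C(1+t)^{-(5/2+r^\ast)/2}
\end{equation*}
for large $t$. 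The same argument applies to $\tau$. For intermediate times $1<t\le T'$ we use continuity and positivity: the norm cannot vanish at a finite time, by backward uniqueness or energy arguments. Alternatively, we shrink the constant $C$.

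The main obstacle is the linear lower bound. Two points need care. First, we must show that the projection of the data onto the slow eigenmode has the claimed decay character without cancellation between $u_0$ and $\div\tau_0$; this is where the hypotheses comparing $r^\ast(u_0)$ and $1+r^\ast(\tau_0)$ enter. Second, we must make sure that the lower bound in \eqref{eqn:NS15} applies to a symbol that is only asymptotically diagonal near $\xi=0$. I would first try to handle this by restricting to a small ball $B(\rho)$, where the eigenvector matrix is a smooth perturbation of an explicit orthogonal one. The lower bound can then be taken only from frequencies in $B(\rho)$, via $P_r$, mimicking the proof of \eqref{eqn:NS15}.
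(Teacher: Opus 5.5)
The gap is the step you flag as the main obstacle, and the stated hypotheses cannot close it. What is missing is a hypothesis, not a computation. By Proposition \ref{zi-fang-zhang}, the coefficient of $e^{\lambda_+ t}$ in $\widehat{u_L}$ is $\bigl((\lambda_+ +1)\widehat{u_0}+\widehat{v_0}\bigr)/(\lambda_+-\lambda_-)$ with $v_0=\mathbb{P}\div\tau_0$. So the effective slow datum is $u_0+\mathbb{P}\div\tau_0$, up to factors $1+O(|\xi|^2)$.

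The hypothesis $r^{\ast}(u_0)\le 1+r^{\ast}(\tau_0)$ allows equality, and then cancellation can occur. Take $\widehat{\tau_0}=|\xi|^{-1}\chi(\xi)M$, with $\chi$ a radial cutoff and $M$ a small constant symmetric matrix that is not a multiple of the identity, and set $u_0=-\mathbb{P}\div\tau_0$. Then $r^{\ast}(\tau_0)=-1$ and $r^{\ast}(u_0)=0$, so the first set of hypotheses holds. Yet the slow coefficient is $-\lambda_+\widehat{v_0}/(\lambda_+-\lambda_-)=O(|\xi|^2|\widehat{v_0}|)$, and $\Vert\nabla u_L(t)\Vert_{L^2}^2$ decays like $(1+t)^{-9/2}$. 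Your second step gives a remainder decaying strictly faster than $(1+t)^{-5/2}$, so $\Vert D(u)(t)\Vert_{L^2}^2=o\bigl((1+t)^{-5/2}\bigr)$. Hence \eqref{eqn:lower-bounds-for-final-estimate-first-case} fails for such data.

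In the strict case $r^{\ast}(u_0)<1+r^{\ast}(\tau_0)$ your claim about the slow datum is correct. There $|\widehat{v_0}|\le|\xi|\,|\widehat{\tau_0}|$ forces $P_{r^{\ast}(u_0)}(v_0)=0$, hence $P_{r^{\ast}(u_0)}(u_0+v_0)=P_{r^{\ast}(u_0)}(u_0)\in(0,\infty)$.

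The second case is not ``identical'' either. The datum $\tau_0$ reaches the slow mode only through $\mathbb{P}\div\tau_0$, whose decay character can exceed $1+r^{\ast}(\tau_0)$. For $\tau_0=\phi I$ one has $\mathbb{P}\div\tau_0=\mathbb{P}\nabla\phi=0$, so the slow datum is just $u_0$. Then $\Vert D(u_L)\Vert_{L^2}^2$ and $\Vert\tau_L\Vert_{L^2}^2$ decay like $(1+t)^{-(5/2+r^{\ast}(u_0))}$, strictly faster than the claimed lower rate when $r^{\ast}(u_0)>1+r^{\ast}(\tau_0)$. You need $r^{\ast}(\mathbb{P}\div\tau_0)=1+r^{\ast}(\tau_0)<r^{\ast}(u_0)$.

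Your treatment of $1<t\le T'$ is also unsupported. The condition $D(u)(t_0)=0$ only forces $u(t_0)=0$. Forward uniqueness does not exclude this, since $\tau(t_0)$ need not vanish, and shrinking $C$ cannot compensate a zero.

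For comparison, the paper's proof of this theorem does not use the coupled semigroup at all. It substitutes $\tau=2\omega D(u)+\varepsilon$ into the momentum equation and writes $u$ by Duhamel for a heat semigroup with forcing $-\mathbb{P}\bigl((u\cdot\nabla)u\bigr)+\mathbb{P}\div\varepsilon$. It then compares $\nabla u$ with the heat evolution of $u_0$, so only \eqref{eqn:NS15} and $r^{\ast}(u_0)$ enter, and controls the forcing by Theorem \ref{thm:error}. The bound for $\tau$ follows from $\Vert\tau\Vert_{L^2}\ge 2\omega\Vert D(u)\Vert_{L^2}-\Vert\varepsilon\Vert_{L^2}$ and Theorem \ref{thm:error} again. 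You could use this shortcut too; it spares you the slaving argument and the separate estimate for $w_\tau$.

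That route does not escape the obstruction above. Since $\varepsilon=-We\,\bigl(\partial_t\tau+u\cdot\nabla\tau+g_a(\tau,\nabla u)\bigr)$, an integration by parts in time shows that the $\varepsilon$-forcing contains the heat evolution of $\mathbb{P}\div\tau_0$ at leading order. No decay bound on $\Vert\varepsilon(s)\Vert_{L^2}$ can make this small relative to the heat evolution of $u_0$ when $r^{\ast}(u_0)=1+r^{\ast}(\tau_0)$. Your spectral picture, which identifies $u_0+\mathbb{P}\div\tau_0$ as the effective datum, is what reveals the extra hypotheses that are needed.
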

Similar lower bounds for a restricted class of initial data were obtained by Huang, Wang, Wen and Zi \cite{MR4335131}, \cite{corrigendum}.  Chen, Li, Yao and Yao \cite{MR4643473} claim to obtain lower bounds for the decay provided that  $u_0 \in L^1 (\RR^3)$ has nonzero mean. Note, however, that this condition is not  fulfilled for any  incompressible $u_0$, see page 721 of the article by  M.E. Schonbek, T. Schonbek and S\"uli \cite{MR1380452}.  Hence, their claim is not justified.

In the following Theorem we just combine the estimates proved in Theorems \ref{general-a}, \ref{thm:error} and \ref{thm:lower} in order to  state in a clearer way our main result: the faster decay of the elastic part $\varepsilon$
of the stress tensor implies that the viscolelastic stress tensor  exhibits an almost Newtonian behavior for large times.    

\begin{Theorem} \label{thm:main-theorem} Under the assumptions of Theorem \ref{general-a} we have:
    \begin{itemize}
        \item[a)] Assume that $r^{\ast} (u_0) \leq 1 + r^{\ast} (\tau _0)$ and $r^{\ast} (u_0) \leq 0$. Then there exist constants $C_1,C_2>0$ such that  
\begin{displaymath}
  C_1    (1 + t) ^{- \left( \frac{5}{2}  + r^{\ast} \left(u_0 \right) \right)} \leq \Vert D(u) (t) \Vert _{L^2} ^2, \Vert \tau (t) \Vert _{L^2} ^2
  \leq C_2    (1 + t) ^{- \left( \frac{5}{2}  + r^{\ast} \left(u_0 \right) \right)}, \quad  t> 1.
\end{displaymath}
Moreover, there exists a constant $C>0$ such that 
\begin{displaymath}
    \Vert  \varepsilon(t) \Vert _{L^2}^2    \leq C     (1 + t) ^{- \left( \frac{7}{2}  + r^{\ast} \left( u_0 \right) \right)}, \quad t> \max \left\{T,1\right\},
\end{displaymath}
so \begin{displaymath}
  \tau = 2 \omega\, D(u) + O \left( (1+t)^{- \left(  \frac{7}{2} + r^ {\ast} (u_0) \right)} \right). 
\end{displaymath}

        \item[b)] Assume that $1 + r^{\ast} (\tau _0) \leq r^{\ast} (u_0)$ and $1 + r^{\ast} (\tau _0) \leq 0$. Then there exist constants $C_1,C_2>0$ such that
\begin{displaymath}
  C_1     (1 + t) ^{- \left( \frac{7}{2}  + r^{\ast} \left(\tau_0 \right) \right)} \leq \Vert D(u) (t) \Vert _{L^2} ^2, \Vert \tau (t) \Vert _{L^2} ^2
  \leq C_2      (1 + t) ^{- \left( \frac{7}{2}  + r^{\ast} \left(\tau_0 \right) \right)}, \quad\forall \, t>1.
\end{displaymath}
Moreover, there exists a constant $C>0$ such that 
\begin{displaymath}
    \Vert  \varepsilon(t) \Vert _{L^2}^2    \leq C     (1 + t) ^{- \left( \frac{9}{2}  + r^{\ast} \left( \tau_0 \right) \right)}, \quad t>\max \left\{T,1\right\},
\end{displaymath}
so \begin{displaymath}
  \tau = 2 \omega\, D(u) + O \left( (1+t)^{- \left(  \frac{9}{2} + r^ {\ast} (\tau_0) \right)} \right). 
\end{displaymath}        
    \end{itemize}
\end{Theorem}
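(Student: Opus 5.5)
Theorem \ref{thm:main-theorem} is obtained by combining Theorems \ref{general-a}, \ref{thm:error} and \ref{thm:lower}. The only work is to identify the exponent
$$\beta := \min \left\{ \tfrac{3}{2}, \tfrac{3}{2} + \min\{ r^{\ast}(u_0), 1 + r^{\ast}(\tau_0)\}\right\}$$
in each of the two regimes, and to check that the time ranges match.

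\emph{Case a).} The hypotheses give $\min\{r^{\ast}(u_0), 1+r^{\ast}(\tau_0)\} = r^{\ast}(u_0) \le 0$, so $\beta = \frac{3}{2} + r^{\ast}(u_0)$.

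For the upper bound on $\tau$, apply \eqref{eqn:high-deriv-tau} with $j=0$. This gives $\Vert \tau(t)\Vert_{L^2}^2 \le C(1+t)^{-(1+\beta)} = C(1+t)^{-(\frac{5}{2}+r^{\ast}(u_0))}$ for $t>T$.

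For the upper bound on $D(u)$, use $\Vert D(u)\Vert_{L^2} \le \Vert \nabla u\Vert_{L^2}$ together with \eqref{eqn:higher-derivatives} for $k=1$. This gives the same rate $(1+t)^{-(1+\beta)}$ for $t>T$.

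To extend the upper bounds to all $t>1$, handle $t\in(1,T]$ separately. There the norms are bounded by energy/$H^2$ bounds on the global small solution: $\Vert \tau(t)\Vert_{L^2}$ and $\Vert\nabla u(t)\Vert_{L^2}$ are controlled by $\Vert (u_0,\tau_0)\Vert_{H^2}$. Since $(1+t)^{-(5/2+r^{\ast}(u_0))}$ is bounded below on the compact interval, enlarging $C_2$ yields the upper bound for all $t>1$.

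The lower bounds are exactly \eqref{eqn:lower-bounds-for-final-estimate-first-case} from Theorem \ref{thm:lower}. Its hypotheses coincide with those of case a).

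For $\ee$, Theorem \ref{thm:error} gives $\Vert \ee(t)\Vert_{L^2}^2 \le C(1+t)^{-(2+\beta)} = C(1+t)^{-(\frac{7}{2}+r^{\ast}(u_0))}$ for $t>T$. Hence this holds in particular for $t>\max\{T,1\}$. The statement $\tau = 2\omega D(u) + O(\cdot)$ is just a rewriting of the definition $\ee = \tau - 2\omega D(u)$ together with this bound, understood in the $L^2$ sense.

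\emph{Case b).} Now $\min\{r^{\ast}(u_0), 1+r^{\ast}(\tau_0)\} = 1 + r^{\ast}(\tau_0) \le 0$, so $\beta = \frac{5}{2} + r^{\ast}(\tau_0)$. The same three steps apply:
\begin{itemize}
\item the upper bounds have exponent $1+\beta = \frac{7}{2}+r^{\ast}(\tau_0)$;
\item the lower bounds come from \eqref{eqn:lower-bounds-for-final-estimate-second-case};
\item the estimate for $\ee$ has exponent $2+\beta = \frac{9}{2}+r^{\ast}(\tau_0)$.
\end{itemize}

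The only delicate point is reconciling the time ranges. The upper estimates hold for $t>T$, while the theorem asserts the two-sided bounds for $t>1$. I handle this as above, using uniform-in-time boundedness of $\Vert\tau\Vert_{L^2}$ and $\Vert\nabla u\Vert_{L^2}$ for the small global strong solution (from Guillop\'e--Saut and the energy estimates of Section 4) to absorb the compact interval $[1,T]$ into the constant.

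The $\ee$ estimate is stated only for $t>\max\{T,1\}$, so no extension is needed there.
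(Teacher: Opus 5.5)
Your proposal is correct and follows the paper's route exactly. The paper obtains Theorem \ref{thm:main-theorem} by combining Theorems \ref{general-a}, \ref{thm:error} and \ref{thm:lower} once the exponent $\min\{\frac32, \frac32 + \min\{r^{\ast}(u_0), 1+r^{\ast}(\tau_0)\}\}$ is identified in each regime, as you do; you rightly use \eqref{eqn:high-deriv-tau} with $j=0$ for $\tau$ rather than the weaker $k=0$ case of \eqref{eqn:higher-derivatives}, and your step absorbing $(1,T]$ into $C_2$ via uniform bounds on the global small solution fills in a detail the paper leaves implicit.
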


\section{Analysis of the Linear Part}
In this section we prove decay estimates for the linear part of \eqref{eqn:oldroyd-b}, which is given by the equations 
\begin{align}
\label{eqn:linear-part-oldroyd}
\partial_t u - (1 - \omega) \Delta u - \PP \, div \, \tau & = 0,  \notag \\
\partial_t \tau + \tau - 2 \omega \, D (u) & = 0.
\end{align}
We first provide an explicit expression  in frequency space  for the solution $(u_l,\tau_L)^T$ of this equation given by    
$(u_L(t),\tau_L (t))^T = \mathcal{G} \ast (u_0,\tau_0)^T $, 
where $\mathcal{G}$ denotes the fundamental solution to \eqref{eqn:linear-part-oldroyd} and  $u_0, \tau_0 \in L^2 (\RR^d)$.

\begin{Proposition} (Zi, Fang and Zhang \cite{MR3211863}, Lemma 3.1). \label{zi-fang-zhang}
Let $(u_L, \tau _L)$ be a solution to \eqref{eqn:linear-part-oldroyd} and for $t >0, \, \xi \in \RR^d$ set
\begin{align*}
  \mathcal{A} (\xi, t):= \frac{e^{\lambda_+ t} - e^{\lambda_- t}}{\lambda_+ - \lambda _-},\quad \mathcal {B} (\xi,t):= \frac{ \left( \lambda_+ + 1 \right) e^{\lambda_+ t} -
  \left( \lambda_- + 1 \right)  e^{\lambda_- t}}{\lambda_+ - \lambda _-}, \quad   \mathcal {C} (\xi,t) := \frac{ \left( \lambda_- + 1 \right) e^{\lambda_+ t} -
  \left( \lambda_+ + 1 \right)  e^{\lambda_- t}}{\lambda_+ - \lambda _-},
\end{align*}
where
\begin{displaymath}
\lambda _{\pm} (\xi) = \frac{- (1 + (1 - \omega)|\xi|^2) \pm \sqrt{\left(1 + (1 - \omega)|\xi|^2) \right)^2 - 4|\xi|^2}}{2}.
\end{displaymath}
Then,  in frequency space the solution $\widehat{L} (\xi, t) = \left( \widehat{u_L} (\xi, t),  \widehat{\tau _L} (\xi, t) \right)$ to  \eqref{eqn:linear-part-oldroyd}  is given by
\begin{displaymath}
  \left( \widehat{u_L} \right) _j  (\xi,t)= \mathcal {B} (\xi,t) \left( \widehat{u_0} \right)_j (\xi) + \mathcal{A} (\xi, t) \, i \, \xi_l
  (\delta _{jk} - \frac{\xi _j \xi_k}{|\xi|^2} ) \left(\widehat{\tau _0} \right)_{lk} (\xi), 
\end{displaymath}
and
\begin{align}
  \left( \widehat{\tau_L} \right) _{jk} (\xi,t) & = e^{-t} \left( \widehat{\tau_0}\right) _{jk} (\xi)  \notag \\   &  - \left( \mathcal {C} (\xi,t)   + e^{-t} \right) \left( \frac{\xi _j \xi_l}{|\xi|^2} \left( \delta_{km} - \frac{\xi _k \xi_m}{|\xi|^2} \right)
   \left( \widehat{\tau_0}\right) _{lm} (\xi)  + \frac{\xi _k \xi_l}{|\xi|^2} \left(  \delta_{jm} - \frac{\xi _j \xi_m}{|\xi|^2} \right) \left( \widehat{\tau_0}\right) _{lm} (\xi) \right) \notag \\ & + \, i \, \omega \mathcal{A} (\xi, t) \left(\xi_j \left( \widehat{u_0} \right) _k (\xi) + \xi_k
     \left( \widehat{u_0} \right) _j (\xi)  \right). \notag
\end{align}
\end{Proposition}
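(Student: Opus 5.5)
The plan is to verify the formulas directly on the Fourier side. We apply the Leray projector and the Fourier transform to \eqref{eqn:linear-part-oldroyd}. Write $\widehat{u} = \widehat{u_L}$, $\widehat{\tau} = \widehat{\tau_L}$ and $\Pi(\xi) = I - \xi\xi^T/|\xi|^2$. Since $\widehat{u}\cdot\xi = 0$, the system becomes a linear ODE in $t$ for each fixed $\xi$:
\begin{align*}
\partial_t \widehat{u}_j + (1-\omega)|\xi|^2 \widehat{u}_j - i\,\Pi_{jk}\,\xi_l\,\widehat{\tau}_{lk} &= 0,\\
\partial_t \widehat{\tau}_{jk} + \widehat{\tau}_{jk} - i\,\omega\,(\xi_j \widehat{u}_k + \xi_k \widehat{u}_j) &= 0.
\end{align*}

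The key step is to reduce this ODE to a closed $2\times 2$ system.
\begin{itemize}
\item Set $\widehat{v}_j := i\,\Pi_{jk}\xi_l\widehat{\tau}_{lk}$, the only combination of $\tau$ that enters the $u$-equation.
\item Apply $i\Pi_{jk}\xi_l$ to the $\tau$-equation. Using $\Pi\widehat{u} = \widehat{u}$ and $\xi\cdot\widehat{u} = 0$, this gives
\begin{displaymath}
\partial_t \widehat{v} + \widehat{v} = -\omega|\xi|^2 \widehat{u}.
\end{displaymath}
\item The pair $(\widehat{u},\widehat{v})$ then solves $X' = MX$ with
\begin{displaymath}
M=\begin{pmatrix} -(1-\omega)|\xi|^2 & 1\\ -\omega|\xi|^2 & -1\end{pmatrix}.
\end{displaymath}
\item The characteristic polynomial is $\lambda^2 + (1+(1-\omega)|\xi|^2)\lambda + |\xi|^2 = 0$, since $(1-\omega)|\xi|^2 + \omega|\xi|^2 = |\xi|^2$. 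Its roots are exactly $\lambda_\pm$.
\item By Cayley--Hamilton (Putzer's formula), $e^{tM} = \mathcal{B}'\, I + \mathcal{A}\,(M+I)$ type expressions with $\mathcal{A} = (e^{\lambda_+t}-e^{\lambda_-t})/(\lambda_+-\lambda_-)$. Concretely, $\widehat{u}(t) = \mathcal{B}\,\widehat{u_0} + \mathcal{A}\,\widehat{v}(0)$. Indeed, $\mathcal{B}$ solves the scalar second order equation with $\mathcal{B}(0)=1$ and $\mathcal{B}'(0) = \lambda_+ + \lambda_- + 1 = -(1-\omega)|\xi|^2$, while $\mathcal{A}(0)=0$ and $\mathcal{A}'(0)=1$. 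This is the stated formula for $\widehat{u_L}$.
\end{itemize}

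For $\tau$ we use Duhamel in the $\tau$-equation:
\begin{displaymath}
\widehat{\tau}(t) = e^{-t}\widehat{\tau_0} + i\omega\int_0^t e^{-(t-s)}\bigl(\xi\otimes\widehat{u}(s) + \widehat{u}(s)\otimes\xi\bigr)\,ds.
\end{displaymath}
Insert the formula for $\widehat{u}(s)$ and use the identities
\begin{displaymath}
\int_0^t e^{-(t-s)} e^{\lambda_\pm s}\,ds = \frac{e^{\lambda_\pm t} - e^{-t}}{\lambda_\pm + 1},
\qquad (\lambda_+ + 1)(\lambda_- + 1) = 1 - |\xi|^2 + \ldots.
\end{displaymath}
More precisely, $\lambda_+\lambda_- = |\xi|^2$ and $\lambda_+ + \lambda_- = -(1+(1-\omega)|\xi|^2)$, so $(\lambda_++1)(\lambda_-+1) = -\omega|\xi|^2$.

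This last identity does the work. The $\widehat{u_0}$ contribution collapses to $i\omega\mathcal{A}(\xi_j\widehat{u_0}_k + \xi_k\widehat{u_0}_j)$. The $\widehat{v}(0)$ contribution becomes $-(\mathcal{C} + e^{-t})/|\xi|^2$ times $(\xi\otimes\widehat{v_0}/i + \text{sym})$. Expanding $\widehat{v_0}$ then reproduces the two projector terms in the statement.

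The main obstacle is this last bookkeeping: matching the integrated coefficients to $\mathcal{C} + e^{-t}$ with the correct sign and factor $|\xi|^{-2}$. I would check it by differentiating the claimed formula in $t$ and verifying both the equation and the initial condition. At $t=0$ we have $\mathcal{C}(0) = -1$, so $\mathcal{C}(0) + 1 = 0$ and $\widehat{\tau}(0) = \widehat{\tau_0}$. The case $\lambda_+ = \lambda_-$ is handled by continuity.
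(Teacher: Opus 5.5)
The paper gives no proof of this proposition; it quotes Lemma 3.1 of Zi, Fang and Zhang, so your derivation is a self-contained substitute rather than a variant of an in-paper argument. Your route is sound, and with one sign correction below it reproduces both stated formulas. Only the component $\widehat v=i\,\Pi(\xi)(\xi\cdot\widehat\tau)$ of the stress feeds back into the velocity, so $(\widehat u,\widehat v)$ closes into a $2\times 2$ system with roots $\lambda_\pm$, and the rest of $\widehat\tau$ is recovered by Duhamel.

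Your Putzer line ``$\mathcal B' I+\mathcal A(M+I)$'' is not right as written. The exact form is $e^{tM}=\mathcal A(t)(M+I)-\mathcal C(t)I$, whose first row is $(\mathcal B,\mathcal A)$ because $\mathcal B+\mathcal C=(\lambda_++\lambda_-+2)\mathcal A$. Your scalar second-order ODE check is what actually proves the $\widehat u$ formula, and it is correct.

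\textbf{Sign error.} The identity you say ``does the work'' has the wrong sign. Evaluating the characteristic polynomial at $-1$ gives
\[
(\lambda_++1)(\lambda_-+1)=1-\bigl(1+(1-\omega)|\xi|^2\bigr)+|\xi|^2=\omega|\xi|^2 ,
\]
not $-\omega|\xi|^2$. The two Duhamel integrals are
\[
\int_0^t e^{-(t-s)}\mathcal B(s)\,ds=\mathcal A(t),\qquad
\int_0^t e^{-(t-s)}\mathcal A(s)\,ds=\frac{\mathcal C(t)+e^{-t}}{(\lambda_++1)(\lambda_-+1)}=\frac{\mathcal C(t)+e^{-t}}{\omega|\xi|^2}.
\]
The first needs no identity. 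The second is $\approx t^2/2>0$ for small $t$, which confirms the positive sign.

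So the $\widehat v(0)$-contribution to $(\widehat{\tau_L})_{jk}$ is $\frac{i(\mathcal C+e^{-t})}{|\xi|^2}\bigl(\xi_j\widehat v_k(0)+\xi_k\widehat v_j(0)\bigr)$. The minus sign in front of the projector terms comes from $i\,\widehat v_k(0)=-\Pi_{km}\xi_l(\widehat{\tau_0})_{lm}$, i.e.\ from $i\cdot i=-1$, not from $(\lambda_++1)(\lambda_-+1)$. Taken literally, your $-\omega|\xi|^2$ would flip the sign of those terms. Your final expression ``$-(\mathcal C+e^{-t})/|\xi|^2$ times $(\xi\otimes\widehat{v_0}/i+\mathrm{sym})$'' is in fact the correct one, so the identity as written contradicts it. Fix that line, and drop the garbled ``$1-|\xi|^2+\ldots$''.

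\textbf{Divergence-free data.} You should state explicitly that $u_0$ is divergence free. You use $\Pi\widehat u(t)=\widehat u(t)$, which holds for all $t$ only if it holds at $t=0$, since the $u$-equation gives $\partial_t(\xi\cdot\widehat u)=-(1-\omega)|\xi|^2\,(\xi\cdot\widehat u)$. For non-solenoidal $u_0$ the longitudinal part evolves by $e^{-(1-\omega)|\xi|^2t}$, and the formula $\mathcal B\,\widehat{u_0}$ fails. This assumption is implicit in the paper's setting $u_0\in L^2_\sigma$. The degenerate cases $\lambda_+=\lambda_-$ (by continuity) and $\xi=0$ (a null set) are fine as you say.
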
 

The following pointwise estimates for $\mathcal{A},  \mathcal{B}$ and $\mathcal{C}$ were obtained by Hieber, Wen and Zi in \cite{MR3909068}.

\begin{Proposition} (Hieber, Wen and Zi \cite{MR3909068}, Lemma 2.2) \label{hieber-wen-zi}
Given $R > 0$, there exist positive constants

\begin{align*}
\theta (\omega, R) & = \frac{1}{2} \min \left\{ \frac{1 - \omega}{2}, \frac{1}{1 + R^2(1 - \omega)} \right\} \\
C_1 (\omega, R) & = \max \left\{ \frac{4 \sqrt{1  -\omega}}{\min \left\{\sqrt{\omega}, 1 - \sqrt{\omega} \right\}},  8 (1 + \sqrt{\omega})^2 \max \left\{\frac{2}{1 - \omega}, 1 + R^2(1 - \omega)  \right\} \right\} \\
C_2 (\omega, R) & = 2 \max \left\{1 + (1 - \omega) R^2, \sqrt{\omega} R \right\} C_1 (\omega, R)\\
C_3 (\omega, R) & = \max \left\{C_2 (\omega, R), \frac{\max \left\{3 \omega \sqrt{1 - \omega}, \frac{2(1 - \sqrt{\omega})}{\sqrt{1 - \omega}} \right\}}{\min \left\{\sqrt{\omega}, 1 - \sqrt{\omega} \right\}} \right\}
\end{align*}
such that for $|\xi| \leq R$ and $t>0$ we have

\begin{align*}
|\mathcal{A} (\xi, t)| & \leq C_1(\omega, R) e ^{- \theta (\omega, R) |\xi|^2 t}, \\
|\mathcal{B} (\xi,t)| & \leq C_2(\omega, R) e ^{- \theta (\omega, R) |\xi|^2 t}, \\
|\mathcal{C} (\xi,t)| & \leq C_3(\omega, R) \left( e ^{- \frac{\theta (\omega, R)}{4(1 + \sqrt{\omega})^2} t} + |\xi|^2 e ^{- \theta (\omega, R) |\xi|^2 t} \right). 
\end{align*}
\end{Proposition}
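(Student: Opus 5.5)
The proof is an elementary analysis of the roots $\lambda_\pm$ of $\lambda^2+(1+(1-\omega)s)\lambda+s=0$, where $s=|\xi|^2\le R^2$, carried out separately on the regions where the discriminant $\Delta(s)=(1+(1-\omega)s)^2-4s$ is bounded away from zero and where it is small. The zeros of $\Delta$ solve $(1-\omega)s-2\sqrt{s}+1=0$, i.e. $\sqrt{s}=1/(1\pm\sqrt{\omega})$. So $\Delta<0$ exactly for $1/(1+\sqrt\omega)<|\xi|<1/(1-\sqrt\omega)$, and $\Delta>0$ outside this interval. The quantity $\min\{\sqrt\omega,1-\sqrt\omega\}$ appearing in $C_1$ measures how far a suitably chosen low-frequency cutoff lies from these degeneracy points. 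I would use three facts throughout:
\begin{itemize}
\item[(i)] $\lambda_++\lambda_-=-(1+(1-\omega)s)$ and $\lambda_+\lambda_-=s$;
\item[(ii)] if $\Delta<0$, then $\operatorname{Re}\lambda_\pm=-(1+(1-\omega)s)/2\le -1/2$;
\item[(iii)] the identity $\frac{e^{at}-e^{bt}}{a-b}=t\int_0^1 e^{(\vartheta a+(1-\vartheta)b)t}\,d\vartheta$, valid for all complex $a\neq b$, which gives $|\mathcal A|\le t\,e^{\max\operatorname{Re}\lambda_\pm\, t}$ uniformly, including near $\Delta=0$.
\end{itemize}

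\textbf{Low frequencies,} $|\xi|\le \frac{1}{2(1+\sqrt\omega)}$ (or a similar cutoff). Here $\Delta$ is bounded below by a constant depending on $\omega$, so $\lambda_+-\lambda_-=\sqrt\Delta\gtrsim 1$. The roots are real, and rationalizing gives
\[
\lambda_+=\frac{-2s}{1+(1-\omega)s+\sqrt\Delta}\le -\frac{s}{1+(1-\omega)R^2}.
\]
Also $\lambda_-\le -\frac{1+(1-\omega)s}{2}\le -\frac12$, so $e^{\lambda_- t}\le e^{-t/2}\le e^{-\theta s t}$ once $\theta R^2\le 1/2$ (the choice of $\theta$ guarantees this up to adjusting constants). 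These bounds give the estimates for $\mathcal A$ and $\mathcal B$; for $\mathcal B$, note $|\lambda_\pm+1|\lesssim 1+(1-\omega)R^2$.

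For $\mathcal C$ on this region I use (i) to write $\lambda_-+1=-\lambda_+-(1-\omega)s$, so $|\lambda_-+1|\lesssim s$. The term $(\lambda_-+1)e^{\lambda_+t}/\sqrt\Delta$ is then $\lesssim |\xi|^2e^{-\theta|\xi|^2t}$. The other term $(\lambda_++1)e^{\lambda_-t}/\sqrt\Delta$ is $\lesssim e^{-t/2}$, which is dominated by the stated $e^{-\frac{\theta}{4(1+\sqrt\omega)^2}t}$ because $\theta\le 1/4$.

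\textbf{Intermediate and high frequencies,} $\frac{1}{2(1+\sqrt\omega)}\le|\xi|\le R$. This region includes the oscillatory interval where $\Delta<0$ and the two degenerate points. When $\Delta\ge0$ we have $\lambda_+\le -\frac{s}{1+(1-\omega)R^2}$, which is bounded below by a positive constant; when $\Delta<0$, (ii) gives $\operatorname{Re}\lambda_\pm\le-\frac12$. In either case $\max\operatorname{Re}\lambda_\pm\le -2\kappa$ for some $\kappa=\kappa(\omega,R)>0$ comparable to $\theta/(1+\sqrt\omega)^2$. Then (iii) yields
\[
|\mathcal A|\le t e^{-2\kappa t}\le \kappa^{-1}e^{-\kappa t}.
\]
Since $s\le R^2$, $e^{-\kappa t}\le e^{-\theta s t}$ provided $\theta R^2\le \kappa$, and the definition of $\theta$ through $\frac{1}{1+R^2(1-\omega)}$ is tailored to this. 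For $\mathcal B$ and $\mathcal C$ I write
\[
\mathcal B=(\lambda_++1)\mathcal A+e^{\lambda_-t},\qquad \mathcal C=(\lambda_-+1)\mathcal A-e^{\lambda_- t}.
\]
Both follow from the bound on $\mathcal A$, the bound $|\lambda_\pm+1|\le 1+(1-\omega)R^2+\sqrt\omega R$, and $|e^{\lambda_-t}|\le e^{-2\kappa t}$; this is where the factor $2\max\{1+(1-\omega)R^2,\sqrt\omega R\}$ in $C_2$ comes from. On this region the bound for $\mathcal C$ is absorbed into the purely exponential term $e^{-\frac{\theta}{4(1+\sqrt\omega)^2}t}$.

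The main obstacle is the uniformity near the degenerate points $|\xi|=1/(1\pm\sqrt\omega)$, where the denominator $\lambda_+-\lambda_-$ vanishes and the naive bound blows up. Identity (iii) is what removes the singularity: it trades division by $\sqrt\Delta$ for a factor $t$, which is then absorbed by halving the exponential rate. The remaining work is bookkeeping, namely checking that the cutoff between regions and the choice of $\theta$ are compatible so that every case yields the same exponent $\theta|\xi|^2t$ with the stated constants.
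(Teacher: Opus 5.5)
The paper does not prove this proposition; it is quoted from Hieber--Wen--Zi, Lemma 2.2, so your argument has to stand on its own. Most of it is sound: the root analysis, the low band $|\xi|\le\frac{1}{2(1+\sqrt\omega)}$, identity (iii), and the decompositions $\mathcal B=(\lambda_++1)\mathcal A+e^{\lambda_-t}$ and $\mathcal C=(\lambda_-+1)\mathcal A-e^{\lambda_-t}$.

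\textbf{The gap.} The step that fails is the bound for $\mathcal A$, and hence for $\mathcal B$, on the band $\frac{1}{2(1+\sqrt\omega)}\le|\xi|\le R$. There you pass to a rate $\kappa$ that is uniform over the band, and then assert $e^{-\kappa t}\le e^{-\theta|\xi|^2t}$ because $\theta R^2\le\kappa$. That inequality does not follow from the definition of $\theta$, and it is false in general.

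Any uniform rate is capped at the bottom of the band. There $\lambda_+$ is real and $-\lambda_+=\frac{2|\xi|^2}{1+(1-\omega)|\xi|^2+\sqrt\Delta}\le 2|\xi|^2=\frac{1}{2(1+\sqrt\omega)^2}$, so necessarily $\kappa\le\frac{1}{4(1+\sqrt\omega)^2}$. On the other hand, $\theta R^2=\frac{R^2}{2(1+R^2(1-\omega))}\to\frac{1}{2(1-\omega)}$ as $R\to\infty$. For instance, $\omega=\frac14$, $R=10$ gives $\theta R^2=\frac{25}{38}$, while $\kappa\le\frac19$.

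At $|\xi|=R$ the target estimate demands decay at rate $\theta R^2$ in $t$. So no bound $Ce^{-\kappa t}$ with $\kappa$ uniform over the band can give it. Your low-frequency parenthetical ``$\theta R^2\le\frac12$'' is false for the same reason. It is harmless there, since that step only needs $\theta|\xi|^2\le\frac1{16}$.

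\textbf{The fix.} Keep the exponent proportional to $|\xi|^2$ throughout. For every $|\xi|\le R$ one has $\max\mathrm{Re}\,\lambda_\pm\le-2\theta|\xi|^2$:
\begin{itemize}
\item for real roots, this follows from $\lambda_+\le-\frac{|\xi|^2}{1+(1-\omega)R^2}$;
\item for complex roots, it follows from $\mathrm{Re}\,\lambda_\pm=-\frac{1+(1-\omega)|\xi|^2}{2}\le-\frac{1-\omega}{2}|\xi|^2$.
\end{itemize}
These two cases are exactly what the two entries in the definition of $\theta$ are for.

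Identity (iii) then gives $|\mathcal A|\le t\,e^{-2\theta|\xi|^2t}\le\frac{1}{e\,\theta|\xi|^2}\,e^{-\theta|\xi|^2t}$. What absorbs the factor $t$ is the lower bound $|\xi|^2\ge\frac{1}{4(1+\sqrt\omega)^2}$ on this band, not a uniform rate. The prefactor is at most $\frac{4(1+\sqrt\omega)^2}{\theta}=8(1+\sqrt\omega)^2\max\{\frac{2}{1-\omega},1+R^2(1-\omega)\}$, which is exactly the second entry of $C_1$.

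The same exponent controls $|e^{\lambda_-t}|$. Moreover, $|\lambda_\pm+1|\le 1+(1-\omega)R^2$ for real roots, and $|\lambda_\pm+1|^2=(\lambda_++1)(\lambda_-+1)=\omega|\xi|^2$ for complex roots. Your decomposition of $\mathcal B$ therefore yields the bound with constant $2\max\{1+(1-\omega)R^2,\sqrt\omega R\}\,C_1=C_2$.

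For $\mathcal C$ on this band, your absorption into the pure exponential is fine as written. Taking $\kappa=\frac{\theta}{4(1+\sqrt\omega)^2}$ is admissible, since $e^{-\theta|\xi|^2t}\le e^{-\frac{\theta}{4(1+\sqrt\omega)^2}t}$ there. This is precisely where the exponent in the $\mathcal C$ estimate comes from.
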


The decay estimates for the solution to the linear equation reads as follows.

\begin{Proposition}\label{decay-linear-part}
  Let $u_0, \tau_0 \in L^2 (\RR^d)$ with decay characters $-\frac{3}{2} < r^{\ast} \left(u_0 \right),  r^{\ast} \left(\tau_0 \right) < \infty $. Then there exists a constant $C>0$ such that
 the solution $(u_L ,\tau _L)$ to \eqref{eqn:linear-part-oldroyd} satisfies 

\begin{displaymath}
\omega \Vert u_L(t) \Vert _{L^2}^2  + \frac{1}{2} \Vert \tau_L (t) \Vert _{L^2} ^2 \leq C   (1 + t) ^{- \left( \frac{d}{2} + \min \{  r^{\ast} \left(u_0 \right),  1 +  r^{\ast} \left(\tau_0 \right) \} \right) }, \quad t>0. 
\end{displaymath}

\end{Proposition}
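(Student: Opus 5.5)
The plan is to work entirely in frequency space with the explicit representation from Proposition \ref{zi-fang-zhang}, and to estimate $\Vert \widehat{u_L}(t)\Vert_{L^2}^2 + \Vert \widehat{\tau_L}(t)\Vert_{L^2}^2$ by Plancherel. I split the frequency integral into a low-frequency part $|\xi|\le R$ and a high-frequency part $|\xi|> R$, for a fixed $R$ (for instance $R=1$). In the low-frequency part I use the pointwise bounds of Proposition \ref{hieber-wen-zi}. This reduces everything to heat-type integrals of the form $\int_{B(R)} e^{-2\theta|\xi|^2 t}|\xi|^{2m}|\widehat{v_0}(\xi)|^2\,d\xi$, which are controlled through the decay character.

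\textbf{Low frequencies.} For $\widehat{u_L}$ I bound
\[
|\widehat{u_L}| \le C_2 e^{-\theta|\xi|^2t}|\widehat{u_0}| + C_1 e^{-\theta|\xi|^2 t}|\xi||\widehat{\tau_0}|,
\]
since the projector has norm at most one. For $\widehat{\tau_L}$ the term $i\omega\mathcal A(\xi_j\widehat{u_0}_k+\xi_k\widehat{u_0}_j)$ is bounded by $C e^{-\theta|\xi|^2t}|\xi||\widehat{u_0}|$, which is even better than the $u_0$ contribution to $u_L$. The terms $e^{-t}\widehat{\tau_0}$ and $e^{-t}(\ldots)$ decay exponentially and only contribute $Ce^{-2t}\Vert\tau_0\Vert_{L^2}^2$. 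The term $\mathcal C(\ldots)$ is bounded by $C(e^{-ct}+|\xi|^2e^{-\theta|\xi|^2t})|\widehat{\tau_0}|$, again better than $|\xi||\widehat{\tau_0}|$ on $B(R)$.

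The key step is then the standard decay-character lemma from Niche and Schonbek \cite{MR3355116}, which underlies \eqref{eqn:NS15}. If $-\frac{3}{2}<r^\ast(v_0)<\infty$, then
\[
\int_{B(R)} e^{-2\theta|\xi|^2t}|\widehat{v_0}|^2\,d\xi \le C(1+t)^{-(\frac{3}{2}+r^\ast(v_0))}.
\]
Applied to $\Lambda v_0$, using $r^\ast(\Lambda v_0)=1+r^\ast(v_0)$, the same lemma gives the weighted version with the extra factor $|\xi|^2$ and rate $(1+t)^{-(\frac52+r^\ast(v_0))}$. Alternatively one can derive this directly by splitting $B(R)$ into $|\xi|\le\rho(t)=c(1+t)^{-1/2}$ and its complement. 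Hence the $u_0$ contributions decay like $(1+t)^{-(\frac32+r^\ast(u_0))}$ and the $\tau_0$ contributions like $(1+t)^{-(\frac52+r^\ast(\tau_0))}$. Taking the slower of the two gives the claimed exponent $\frac d2+\min\{r^\ast(u_0),1+r^\ast(\tau_0)\}$ with $d=3$.

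\textbf{High frequencies.} For $|\xi|>R$ I would use an energy argument instead of the explicit formula. Multiplying the first equation of \eqref{eqn:linear-part-oldroyd} by $\omega u$ and the second by $\frac12\tau$, the coupling terms cancel because $\int \PP\,\div\,\tau\cdot u = -\int \tau:D(u)$. This yields, pointwise in $\xi$,
\[
\frac{d}{dt}\Big(\omega|\widehat{u_L}|^2+\tfrac12|\widehat{\tau_L}|^2\Big) \le -2\omega(1-\omega)|\xi|^2|\widehat{u_L}|^2-|\widehat{\tau_L}|^2 .
\]
For $|\xi|>R$ the right-hand side is at most $-c\big(\omega|\widehat{u_L}|^2+\frac12|\widehat{\tau_L}|^2\big)$, so the high-frequency energy decays exponentially, like $e^{-ct}(\Vert u_0\Vert^2+\Vert\tau_0\Vert^2)$. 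This also explains the weights $\omega$ and $\frac12$ in the statement. Alternatively one can bound $\lambda_\pm$ directly: both real parts are $\le -c<0$ for $|\xi|\ge R$, though the $\mathcal A,\mathcal B$ quotients need care near $\lambda_+=\lambda_-$. The energy route avoids that, so I prefer it.

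\textbf{Main obstacle.} I expect the main difficulty to be the low-frequency bound for $\mathcal C$ and $\mathcal B$ together with the multipliers $\xi_j\xi_l/|\xi|^2$. One must check that these bounded multipliers do not destroy the decay-character control; they do not, since the estimate only uses $|\widehat{\tau_0}|$ pointwise. The other point to verify is the precise form of the decay-character lemma for the weighted integrals. Both are routine given Proposition \ref{hieber-wen-zi}. Finally, the bound for $t$ near $0$ is trivial by $L^2$ boundedness, so the estimate holds for all $t>0$.
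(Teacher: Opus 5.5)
Your proof is correct, but it is organized differently from the paper's.

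The paper never splits at a fixed radius. It starts from the global energy identity $\frac{d}{dt}\bigl(\omega\Vert u_L\Vert_{L^2}^2+\frac12\Vert\tau_L\Vert_{L^2}^2\bigr)+\Vert\tau_L\Vert_{L^2}^2+2\omega(1-\omega)\Vert\nabla u_L\Vert_{L^2}^2=0$ and applies Fourier splitting on a shrinking ball $B(t)$ of radius $\sim(1+t)^{-1/2}$. The explicit formula, through the bounds on $\mathcal A$ and $\mathcal B$ only, is used solely to control $\int_{B(t)}|\widehat{u_L}|^2\,d\xi$. It never estimates $\tau_L$ pointwise, because the damping term $\Vert\tau_L\Vert_{L^2}^2$ absorbs it. The rate is then extracted with the integrating factor $(1+t)^A$.

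You instead split statically at $|\xi|=R$ and estimate both components explicitly on $B(R)$, so you also need the bound on $\mathcal C$. You handle $|\xi|>R$ by the energy identity frequency by frequency. This is legitimate: the linear system decouples in $\xi$, and the coupling terms cancel pointwise using $\xi\cdot\widehat{u_L}=0$ and the symmetry of $\tau_L$, the same facts behind the paper's identity.

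Your route is more direct and avoids the integrating-factor bookkeeping. That bookkeeping includes tuning the radius of $B(t)$ against $A$, and the step $\frac{A}{1+t}\Vert\tau_L\Vert_{L^2}^2\le\Vert\tau_L\Vert_{L^2}^2$, which needs $t\ge A-1$. Your route also yields more. Every low-frequency contribution to $\widehat{\tau_L}$ either carries one more factor of $|\xi|$ than the corresponding term in $\widehat{u_L}$ or is exponentially small. So you in fact get $\Vert\tau_L(t)\Vert_{L^2}^2\le C(1+t)^{-(\frac52+\min\{r^\ast(u_0),1+r^\ast(\tau_0)\})}$, a linear analogue of \eqref{eqn:high-deriv-tau}.

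The paper's route earns its keep in the proof of Theorem \ref{thm-a-equal-zero}, where it is reused as a template. For the nonlinear problem the energy inequality is only available after integrating over all frequencies, so high frequencies cannot be treated $\xi$ by $\xi$. Fourier splitting, by contrast, only needs pointwise control of $\widehat u$ on $B(t)$.

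One small technical point: for $v_0\in L^2$ alone, $\Lambda v_0$ need not lie in $L^2$. So instead of invoking $r^\ast(\Lambda v_0)=1+r^\ast(v_0)$ directly, either apply it to the truncation whose Fourier transform is $\mathbf{1}_{B(R)}|\xi|\widehat{v_0}$, or simply use $|\xi|^2e^{-\theta|\xi|^2t}\le C(1+t)^{-1}$ on $B(R)$, as the paper does.
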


\begin{proof}
Multiplying  \eqref{eqn:linear-part-oldroyd} by $\left(2 \omega \, u_L , \tau_L \right)$,  integrating  by parts and adding  leads to   
\begin{equation}
\label{eqn:energy-equality-decay-linear}
\frac{d}{dt} \left( \omega \Vert u_L(t) \Vert _{L^2}^2 + \frac{1}{2} \Vert \tau_L (t) \Vert _{L^2} ^2 \right) + \Vert \tau_L (t) \Vert _{L^2} ^2 = - 2 \omega (1 - \omega) \Vert \nabla u_L (t) \Vert _{L^2} ^2.
\end{equation}
In frequency space consider the ball

\begin{displaymath}
B(t) = \left\{\xi \in \RR^d: |\xi| \leq \frac{g(t)}{\sqrt{1 - \omega}}   \right\},
\end{displaymath}
for some continuous, decreasing function $g$ with g(0) = 1, which will be  determined later. Then 

\begin{displaymath}
    - 2 \omega (1 - \omega) \Vert \nabla u_L (t) \Vert _{L^2} ^2   \leq - 2 \omega (1 - \omega) \int _{B(t) ^c} |\xi|^2 |\widehat{u_L} (\xi, t) | ^2 \, d \xi  \leq - 2 \omega g^2(t) \int _{B(t) ^c}  |\widehat{u_L} (\xi, t) | ^2 \, d \xi,
\end{displaymath}
which together with \eqref{eqn:energy-equality-decay-linear} leads to

\begin{equation}
\label{ineq-decay-linear}
  \frac{d}{dt} \left( \omega \Vert u_L(t) \Vert _{L^2}^2  + \frac{1}{2} \Vert \tau_L (t) \Vert _{L^2} ^2 \right)  +  \Vert \tau_L (t) \Vert _{L^2} ^2 + 2 \omega \, g^2(t) \Vert u_L(t) \Vert _{L^2} ^2  \leq  2 \omega \, g^2(t) \int _{B(t)}  |\widehat{u_L} (\xi, t) | ^2 \, d \xi.
\end{equation}
We now establish a pointwise estimate for $|\widehat{u} (\xi, t) |$ in $B(t)$.  Proposition \ref{zi-fang-zhang} yields

\begin{displaymath}
|\widehat{u_L} (x,t)|^2 \leq C \left(|\mathcal{B} (\xi,t)| ^2 |\widehat{u_0} (\xi)|^2 +   |\xi|^2 |\mathcal{A} (\xi,t)| ^2  |\widehat{\tau_0} (\xi)|^2 \right).
\end{displaymath}
Suppose now that $|\xi| \leq g(t) \leq R$,  for some fixed $R$.  Then,  after choosing $g^2(t) = C (1 + t) ^{-1}$ for some $C  > 0$,  from this  and Proposition \ref{hieber-wen-zi} we obtain 

\begin{align*}
\int _{B(t)} |\widehat{u_L} (x,t)|^2 \, d \xi & \leq C \left(  \int _{B(t)} e^{- C t |\xi|^2 } |\widehat{u_0} (\xi) |^2 \, d \xi + \int _{B(t)} |\xi|^2 e^{- C t |\xi|^2 } |\widehat{\tau_0} (\xi) |^2 \, d \xi \right) \\ & \leq C \Vert e^{t \Delta} u_0 \Vert _{L^2} ^2 + C (1 + t) ^{-1} \Vert e^{t \Delta} \tau_0 \Vert _{L^2} ^2 \\ & \leq C (1 + t) ^{- \left( \frac{d}{2} + r^{\ast} \left(u_0 \right) \right)} + C (1 + t) ^{- \left( 1+  \frac{d}{2} + r^{\ast} \left(\tau_0 \right) \right)},
\end{align*}
where we used the decay of solutions to the heat equation, i.e., estimates in \eqref{eqn:NS15} with $\alpha = 1$. Plugging this into \eqref{ineq-decay-linear}  yields

\begin{align}
\label{eqn:fourier-splitting-before-integrating-factor}
\frac{d}{dt} \left( \omega \Vert u_L(t) \Vert _{L^2}^2  + \frac{1}{2} \Vert \tau_L (t) \Vert _{L^2} ^2 \right) & + \Vert \tau_L (t) \Vert _{L^2} ^2 + C  \omega \, (1 + t) ^{-1}  \Vert u_L(t) \Vert _{L^2} ^2 \notag \\ & \leq  C \omega \, (1 + t) ^{-1} \left(  (1 + t) ^{- \left( \frac{d}{2} + r^{\ast} \left(u_0 \right) \right)} +  (1 + t) ^{- \left( 1+  \frac{d}{2} + r^{\ast} \left(\tau_0 \right) \right)} \right).
\end{align}
Given  $A > 1$, we see that   $\frac{A}{t+1} \Vert \tau_L (t) \Vert _{L^2} ^2 \leq \Vert \tau_L (t) \Vert _{L^2} ^2$. Multiplying   \eqref{eqn:fourier-splitting-before-integrating-factor}
by   the integrating factor $h( t) = C (t + 1) ^A$,  where
$A > \max \left\{1,  \frac{d}{2} + r^{\ast} \left(u_0 \right),  1+  \frac{d}{2} + r^{\ast} \left(\tau_0 \right) \right\}$, 
we  obtain 
\begin{align*}
  \frac{d}{dt} & \left( C (t + 1) ^A  \left( \omega \Vert u_L(t) \Vert _{L^2}  + \frac{1}{2} \Vert \tau_L (t) \Vert _{L^2} ^2  \right) \right)
                 \leq C (t + 1) ^{A - 1} \left(  (1 + t) ^{- \left( \frac{d}{2} + r^{\ast} \left(u_0 \right) \right)} +  (1 + t) ^{- \left( 1+  \frac{d}{2} + r^{\ast} \left(\tau_0 \right) \right)} \right).
\end{align*}
This finally leads to

\begin{displaymath}
  \omega \Vert u_L(t) \Vert _{L^2}^2  + \frac{1}{2} \Vert \tau_L (t) \Vert _{L^2} ^2 \leq C   (1 + t) ^{- \left( \frac{d}{2} + \min \{  r^{\ast} \left(u_0 \right),  1 +  r^{\ast} \left(\tau_0 \right) \} \right) },
  \quad t>0.
  \end{displaymath}
\end{proof}
%\qed

\section{Proof of the Main Theorems} 

\subsection{Proof of Theorem \ref{thm-a-equal-zero}} 

\noindent
\begin{proof} Standard computations lead to the energy inequality

\begin{displaymath}
\frac{d}{dt} \left( \omega \Vert u(t) \Vert _{L^2}^2 + \frac{1}{2} \Vert \tau (t) \Vert _{L^2} ^2 \right) + \Vert \tau (t) \Vert _{L^2} ^2 \leq - 2 \omega (1 - \omega) \Vert \nabla u (t) \Vert _{L^2} ^2,
\end{displaymath}
from which we obtain

\begin{align}
\label{eqn:first-inequality}
  \frac{d}{dt}  \left( \omega \Vert u(t) \Vert _{L^2}^2  + \frac{1}{2} \Vert \tau (t) \Vert _{L^2} ^2 \right)  & + \Vert \tau (t) \Vert _{L^2} ^2 + 2 \omega \, g^2(t) \Vert u(t) \Vert _{L^2} ^2  \notag  \\
  &\leq  2 \omega \, g^2(t) \int _{B(t)}  |\widehat{u} (\xi, t) | ^2 \, d \xi \notag \\ &  =:2 C  \omega \, g^2(t) \int _{B(t)} \left(  | \widehat{L} (\xi, t) | ^2 + |\widehat{NL} (\xi,t)| ^2 \right) \, d \xi.
\end{align}
Here $B(t) \subset \RR^3$ is defined as in Proposition \ref{decay-linear-part} and $\widehat{L} (\xi, t)$ is as in Proposition \ref{zi-fang-zhang}. As

\begin{align*}
\begin{pmatrix} \widehat{u} (t) \\ \widehat{\tau} (t) \end{pmatrix} & = \mathcal{\widehat{G}} \begin{pmatrix} \widehat{u}_0 \\ \widehat{\tau}_0 \end{pmatrix} - \int _0 ^t \widehat{\mathcal{G}} (t - s) \begin{pmatrix} \PP \, div (u \otimes u) \notag \\ div(u \otimes \tau ) + g_a (\tau, \nabla u)  \end{pmatrix} ^{\wedge} \, ds  = \widehat{L} (\xi, t) + \widehat{NL} (\xi,t), \notag
\end{align*}
then
\begin{align}
  \left( \widehat{u}  \right) _j  (\xi,t) & = \mathcal {B} (\xi,t) \left( \widehat{u_0} \right)_j (\xi) + \mathcal{A} (\xi, t) \, i \, \xi_l \left(\delta _{jk} - \frac{\xi _j \xi_k}{|\xi|^2}\right)
    \left(\widehat{\tau _0} \right)_{lk} (\xi) \notag  \\ &- \int_0 ^t \mathcal{B} (\xi,  t - s) i \,  \xi _l \left( \delta _{jk} - \frac{\xi _j \xi_k}{|\xi|^2} \right) \widehat{u_l u_k} (\xi, s) \, ds \notag \\ & - \int_0 ^t \mathcal{A} (\xi,  t - s) i \,  \xi _l \left( \delta _{jk} - \frac{\xi _j \xi_k}{|\xi|^2} \right) \left( \widehat{div \, (u \otimes \tau)} + \widehat{g_a (\tau, \nabla u)} \right) (\xi, s) \, ds. \notag
\end{align}
The first two terms describe the components of  the solution to the linear problem \eqref{eqn:linear-part-oldroyd}. Hence, Proposition \ref{decay-linear-part} yields 

\begin{equation}
\label{eqn:linear-part-oldroyd-ball}
\int_{B(t)} | \widehat{L} (\xi, t)|^2 \, d \xi \leq   C   (1 + t) ^{- \left( \frac{3}{2} + \min \{  r^{\ast} \left(u_0 \right),  1 +  r^{\ast} \left(\tau_0 \right) \} \right) }, \quad t>0.
\end{equation}
Taking  $|\xi| \leq g(t)  = R$, from Proposition \ref{hieber-wen-zi} we have

\begin{equation}
\label{eqn:pointwise-AB}
|\mathcal{A} (\xi, t)|  \leq C_1 e ^{- \frac{1}{4}(1 - \omega)  |\xi|^2 t}, \, \,  |\mathcal{B} (\xi, t)|  \leq C_2 e ^{- \frac{1}{4}(1 - \omega) |\xi|^2 t}, \quad t>0, 
\end{equation}
for some constants $C_i = C_i (\omega, R), i = 1,2$.  As

\begin{align*}
  \widehat{NL} (\xi,t)  & = - \int_0 ^t \mathcal{B} (\xi,  t - s) i \,  \xi _l \left( \delta _{jk} - \frac{\xi _j \xi_k}{|\xi|^2} \right)  \widehat{u_l u_k} (\xi, s) \, ds  -
  \int_0 ^t \mathcal{A} (\xi,  t - s) i \,  \xi _l \left( \delta _{jk} - \frac{\xi _j \xi_k}{|\xi|^2} \right) \widehat{div \, (u \otimes \tau)} (\xi, s) \, ds \\ & -
  \int_0 ^t \mathcal{A} (\xi,  t - s) i \,  \xi _l \left( \delta _{jk} - \frac{\xi _j \xi_k}{|\xi|^2} \right) \widehat{g_a (\tau, \nabla u)} (\xi, s) \, ds,
\end{align*}
and as we also have 
\begin{align}
\label{eqn:uu-grad-utau}
|\widehat{u_l u_k} (\xi,t)| & \leq C \Vert u(t) \Vert _{L^2} ^2, \,  \quad | \widehat{div \, (u \otimes \tau)} (\xi, t)| \leq C |\xi| \Vert u(t) \Vert _{L^2} \,  \Vert \tau(t)  \Vert _{L^2}, \notag \\  &   |\widehat{g_a (\tau, \nabla u)} (\xi, s)| \leq C \Vert \tau(t) \Vert _{L^2} \Vert \nabla u (t) \Vert _{L^2} \leq C \left( \Vert \tau(t) \Vert _{L^2} ^2 + \Vert \nabla u (t) \Vert _{L^2} ^2 \right),
\end{align}
estimates  \eqref{eqn:pointwise-AB} and \eqref{eqn:uu-grad-utau} imply

\begin{align}
\label{eqn:estimate-nonlinear-term}
\int_{B(t)} |\widehat{NL}(\xi,t)|^2 \, d \xi & \leq C \int_{B(t)} \left( \int_0 ^t |\xi| \Vert u(s) \Vert _{L^2} ^2 \, ds  \right)^2 \, d \xi  + C \int _{B(t)}  \left( \int_0 ^t |\xi|^2 \Vert u(s) \Vert _{L^2} \,  \Vert \tau(s)  \Vert _{L^2} \, ds  \right)^2   \, d \xi \notag \\ & + C \int_{B(t)} \left( \int_0 ^t |\xi| \left( \Vert \tau(t) \Vert _{L^2} ^2 + \Vert \nabla u (t) \Vert _{L^2} ^2\right) \, ds  \right)^2  d \xi \notag \\ & \leq C \int_{B(t)} t |\xi|^2 \left( \int_0 ^t \Vert u(s) \Vert _{L^2} ^4 \, ds \right)  \, d \xi +  C \int _{B(t)} t |\xi|^4  \left( \int_0 ^t  \Vert u(s) \Vert _{L^2} ^2 \,  \Vert \tau(s)  \Vert _{L^2} ^2 \, ds  \right)   \, d \xi \notag \\ & + C \int_{B(t)} t |\xi|^2 \left( \int_0 ^t \Vert \tau (s) \Vert _{L^2} ^4 \, ds \right)  \, d \xi + C \int_{B(t)} |\xi|^2 \left( \int_0 ^t \Vert \nabla u (s) \Vert _{L^2} ^2\right) ^2 \, d \xi \notag \\  & \leq C t \, g^2(t)  \int_{B(t)}  \left( \int_0 ^t \Vert u(s) \Vert _{L^2} ^4 \, ds \right)  \, d \xi + C t \, g^4(t) \int _{B(t)}  \left( \int_0 ^t  \Vert u(s) \Vert _{L^2} ^2 \,  \Vert \tau(s)  \Vert _{L^2} ^2 \, ds  \right)   \, d \xi \notag \\ & + C t \, g^2(t)  \int_{B(t)}  \left( \int_0 ^t \Vert \tau (s) \Vert _{L^2} ^4 \, ds \right)  \, d \xi + C \int_{B(t)} |\xi|^2 \left( \int_0 ^t \Vert \nabla u (s) \Vert _{L^2} ^2\right) ^2 \, d \xi.
\end{align}
Setting  $g(t):= \left(2 C \omega (1 + t) \right) ^{- \frac{1}{2}}$ and using the a priori estimates $\Vert u(t) \Vert _{L^2} \leq C$ and $\Vert \tau (t) \Vert _{L^2} \leq C$, the fact that $u \in L^2 _t \dot{H}^1 _x$ (see Lions and Masmoudi \cite{MR1763488}) and using ${\text Vol} \, B(t) = C (1+t) ^{- \frac{3}{2}}$ in   \eqref{eqn:estimate-nonlinear-term}, we obtain

\begin{displaymath}
\int_{B(t)} |\widehat{NL}(\xi,t)|^2 \, d \xi \leq C (1+t) ^{- \frac{1}{2}} + C (1+t) ^{- \frac{3}{2}}  C (1+t) ^{- \frac{5}{2}} \leq C (1+t) ^{- \frac{1}{2}}, \quad t>0. 
\end{displaymath}
Combining this with \eqref{eqn:first-inequality} and \eqref{eqn:linear-part-oldroyd-ball} yields 

\begin{align}
\frac{d}{dt} & \left( \omega \Vert u(t) \Vert _{L^2}^2  + \frac{1}{2} \Vert \tau (t) \Vert _{L^2} ^2 \right)  + \Vert \tau (t) \Vert _{L^2} ^2 + C (1 + t) ^{-1} \Vert u(t) \Vert _{L^2} ^2  \notag \\ & \leq C (1 + t) ^{-1} \left(    (1 + t) ^{- \left( \frac{3}{2} + \min \{  r^{\ast} \left(u_0 \right),  1 +  r^{\ast} \left(\tau_0 \right) \} \right) } + C (1+t) ^{- \frac{1}{2}} \right), \quad t>0.  \notag
\end{align}
Proceeding as in the proof of Proposition \ref{decay-linear-part}, we multiply by the integrating factor $h(t) = C (1 + t) ^A$, where $A > \max  \left\{1,  \frac{3}{2} + \min \{  r^{\ast} \left(u_0 \right),  1 +  r^{\ast} \left(\tau_0 \right) \} \right\}$. We then obtain

\begin{align}
\frac{d}{dt}  & \left( C (1+t) ^A \left( \omega \Vert u(t) \Vert _{L^2}^2  + \frac{1}{2} \Vert \tau (t) \Vert _{L^2} ^2 \right)  \right) \notag &  \leq C (1 + t) ^{A-1} \left(    (1 + t) ^{- \left( \frac{3}{2} + \min \{  r^{\ast} \left(u_0 \right),  1 +  r^{\ast} \left(\tau_0 \right) \} \right) } + C (1+t) ^{- \frac{1}{2}} \right), \notag
\end{align}
from which we obtain the preliminary estimate

\begin{equation}
\label{eqn:preliminary-estimate}
\omega \Vert u(t) \Vert _{L^2}^2  + \frac{1}{2} \Vert \tau (t) \Vert _{L^2} ^2   \leq C   (1 + t) ^{- \left( \frac{3}{2} + \min \{  r^{\ast} \left(u_0 \right),
    1 +  r^{\ast} \left(\tau_0 \right) \} \right) } + C (1+t) ^{- \frac{1}{2}}, \quad t>0. 
\end{equation}

We now bootstrap  this decay rate. If the slower term on the right-hand side is the second one,  i.e.

\begin{displaymath}
\omega \Vert u(t) \Vert _{L^2}^2  + \frac{1}{2} \Vert \tau (t) \Vert _{L^2} ^2   \leq  C (1+t) ^{- \frac{1}{2}}, \quad t>0,
\end{displaymath}
then 

\begin{align*}
%\label{eqn:end-of-loop}
 \int_0 ^t \Vert u(s) \Vert _{L^2} ^4 \, ds & \leq C \ln (1+t) \leq C (1+t)^{\mu}, \quad \int_0 ^t  \Vert u(s) \Vert _{L^2} ^2 \,  \Vert \tau(s)  \Vert _{L^2} ^2 \, ds \leq C \ln (1+t) \leq C (1+t)^{\mu} \notag \\ &  \int_0 ^t \Vert \tau (s) \Vert _{L^2} ^4 \, ds  \leq C \ln (1+t) \leq C (1+t)^{\mu},
\end{align*}
for some $0 < \mu < 1$. From \eqref{eqn:estimate-nonlinear-term} we obtain

\begin{equation*}
%\label{eqn:decay-nonlinear-term}
\int_{B(t)} |\widehat{NL}(\xi,t)|^2 \, d \xi  \leq C (1+t) ^{- \frac{3}{2} + \mu}, \quad t>0.
\end{equation*}
Proceeding as before we get

\begin{displaymath}
  \omega \Vert u(t) \Vert _{L^2}^2  + \frac{1}{2} \Vert \tau (t) \Vert _{L^2} ^2   \leq C   (1 + t) ^{- \left( \frac{3}{2} + \min \{  r^{\ast} \left(u_0 \right), 1 +  r^{\ast} \left(\tau_0 \right) \} \right) } + C (1+t) ^{- \frac{3}{2} + \mu}, \quad t>0. 
\end{displaymath}
Suppose that again the slower term is the last one on the right hand side. Note that as $-\frac{3}{2} + \mu < - \frac{1}{2}$, then 

\begin{equation}
\label{eqn:end-of-loop}
 \int_0 ^t \Vert u(s) \Vert _{L^2} ^4 \, ds \leq C, \, \, \, \,     \int_0 ^t  \Vert u(s) \Vert _{L^2} ^2 \,  \Vert \tau(s)  \Vert _{L^2} ^2 \, ds \leq C, \, \, \,  \, \int_0 ^t \Vert \tau (s) \Vert _{L^2} ^4 \, ds \leq C.
\end{equation}
Going back to \eqref{eqn:estimate-nonlinear-term} and then following the same argument we obtain
\begin{displaymath}
  \omega \Vert u(t) \Vert _{L^2}^2  + \frac{1}{2} \Vert \tau (t) \Vert _{L^2} ^2   \leq C   (1 + t) ^{- \left( \frac{3}{2} + \min \{  r^{\ast} \left(u_0 \right),
      1 +  r^{\ast} \left(\tau_0 \right) \} \right) } + C (1+t) ^{- \frac{3}{2}}, \quad t>0.
\end{displaymath}
If  the slower term were still the second one,  we circle back to \eqref{eqn:end-of-loop} and no improvement is obtained.

Now suppose in \eqref{eqn:preliminary-estimate} the slower term were the first one, i.e.

\begin{equation*}
\gamma :=  - \left( \frac{3}{2} + \min \{  r^{\ast} \left(u_0 \right), 1 +  r^{\ast} \left(\tau_0 \right) \} \right) > - \frac{1}{2}.
\end{equation*}
This leads to

\begin{align*}
 \int_0 ^t \Vert u(s) \Vert _{L^2} ^4 \, ds  & \leq C (1+t)^{2 \gamma + 1},\quad  \int_0 ^t  \Vert u(s) \Vert _{L^2} ^2 \,  \Vert \tau(s)  \Vert _{L^2} ^2 \, ds\leq C (1+t)^{2 \gamma + 1} \notag \\ & \int_0 ^t \Vert \tau (s) \Vert _{L^2} ^4 \, ds   \leq C (1+t)^{2 \gamma + 1}, 
\end{align*}
where $0 < 2 \gamma + 1 < 1$. Hence

\begin{displaymath}
\int_{B(t)} |\widehat{NL}(\xi,t)|^2 \, d \xi  \leq C (1+t) ^{- \frac{1}{2} + 2 \gamma}, \quad t>0,
\end{displaymath}
so 

\begin{align*}
  \omega \Vert u(t) \Vert _{L^2}^2  + \frac{1}{2} \Vert \tau (t) \Vert _{L^2} ^2   & \leq C   (1 + t) ^{- \left( \frac{3}{2} + \min \{  r^{\ast} \left(u_0 \right), 1 +  r^{\ast} \left(\tau_0 \right) \} \right) } + C (1+t) ^{- \frac{1}{2} + 2 \gamma}, \quad t>0 \\ & \leq C   (1 + t) ^{- \left( \frac{3}{2} + \min \{  r^{\ast} \left(u_0 \right), 1 +  r^{\ast} \left(\tau_0 \right) \} \right) }, 
\end{align*}
because $2 \gamma  < 0$. So, our estimate stays the same. 

If at any time during the iterations in the first case the slower term were the one coming from the linear part instead of that arising from the nonlinear part, we would switch to the latter part of the argument, as just described.  
  We have thus established our  result. 

\end{proof}

\subsection{Proof of Theorem \ref{general-a}} 

\begin{proof}
We start by  proving \eqref{eqn:higher-derivatives} for $k = 0$.  As in Hieber, Wen and Zi \cite{MR3909068}, we obtain the estimate

\begin{equation}
\label{eqn:equation59}
\frac{d}{dt} F_1 (t) +  \frac{1}{2} F_2 (t)   \leq \frac{2 \kappa_2}{(1 - \omega) (1 + t)} \int _{B(t)} |\widehat{u} (\xi,t)| ^2 \, d \xi,
\end{equation}
where 

\begin{equation*}
    B(t) = \left\{ \xi \in \RR^3: |\xi| \leq  \left( \frac{4 \kappa_2}{(1 - \omega)(1 + t)}\right) ^{\frac{1}{2}} \right\},
\end{equation*}
and
\begin{align}
\label{eqn:f1}
  F_1 (t) &:= \Vert \tau(t) \Vert _{H^2} ^2 + (1 - \omega) \kappa_1 \Vert \mbox{curl} \, \mbox{div} \tau (t) \Vert _{L^2} ^2 + \frac{\kappa_2}{2 \omega (1 - \omega)}
            \Vert \partial_t \tau (t) \Vert _{L^2} ^2 \notag \\ & \quad + (1 - \omega) \Vert \nabla u(t) \Vert _{L^2} ^2 +   \frac{\kappa_2}{1 - \omega} \Vert u(t) \Vert _{L^2} ^2
         + \frac{\kappa_2}{1 - \omega} \Vert \partial_t u(t) \Vert _{L^2} ^2,
\end{align} and 
\begin{align*}
F_2 (t) &:= \Vert \tau(t) \Vert _{H^2} ^2 + \Vert \mbox{curl} \, \mbox{div} \tau (t) \Vert _{L^2} ^2 + \Vert \partial_t \tau (t) \Vert _{L^2} ^2 \notag \\ & \quad + \Vert \nabla u(t) \Vert _{L^2} ^2 +   \frac{ 2
\kappa_2}{(1 - \omega) (1 + t)} \Vert u(t) \Vert _{L^2} ^2   + \Vert \partial_t u(t) \Vert _{L^2} ^2,
\end{align*}
for some constants $\kappa_1, \kappa_2 > 0$. Then 
\begin{equation}
    \label{eqn:integrating-factor}
    \frac{d}{dt} \left((1+t) F_1(t) \right) \leq (1+t) \frac{d}{dt} F_1 (t) + \frac{1}{2} (1+t) F_2 (t),  
\end{equation}
for $1 + t > \max \left\{2, 2(1-\omega)\kappa_1, \frac{\kappa_2}{\omega (1 - \omega)}, \frac{\kappa_2}{1 - \omega} \right\}$. Hence, the estimate will be valid for $t > T = T(\omega)$, where

\begin{displaymath}
    T = T(\omega) = \max \left\{2, 2(1-\omega)\kappa_1, \frac{\kappa_2}{\omega (1 - \omega)}, \frac{\kappa_2}{1 - \omega} \right\} -1.
\end{displaymath}

We are now aiming for a pointwise estimate for $|\widehat{u} (\xi,t)|$ in $B(t)$ in \eqref{eqn:equation59}. Compared to the case $a = 0$ from Theorem \ref{thm-a-equal-zero}, now the term $g_a$ is 

\begin{displaymath}
    g_a (\tau, \nabla u) = \tau W(u) - W(u) \tau - a \left(D(u) \tau + \tau D(u) \right).
\end{displaymath}
However, the fact that 

\begin{displaymath}
    |\widehat{\left( (D(u) \tau \right)}(\xi,t)| \leq C \Vert \nabla u (t) \Vert _{L^2} \Vert \tau (t) \Vert _{L^2},
\end{displaymath}
implies that $g_a$ still obeys the same bound as in the case $a = 0$, i.e. \eqref{eqn:uu-grad-utau}. Then, the proof for our estimate is identical to that of Theorem \ref{thm-a-equal-zero}. In the end we obtain

\begin{equation}
\label{eqn:estimate-k-equal-zero}
\Vert u(t) \Vert _{L^2} ^2  + \Vert \tau(t) \Vert _{L^2} ^2 \leq C  (1+t) ^{- \left( \frac{3}{2} + \min \left\{  r^{\ast} \left(u_0 \right),  1 +  r^{\ast} \left(\tau_0 \right) \right\} \right)}, \quad, t>0, 
\end{equation}
hence our result holds for $k=0$.

Let us continue with the case $k = 1$.  Considering  the ball

\begin{displaymath}
   B(t) = \left\{\xi \in \RR^3: |\xi|^2 \leq g^2(t) = \frac{24}{(1 - \omega)} (1 + t) ^{- \frac{1}{2}} \right\} 
\end{displaymath}
and 
following the computations in Hieber, Wen and Zi \cite{MR3909068},  we obtain

\begin{equation}
\label{eqn:inequality-g}
\frac{d}{dt} G(t) + \frac{3}{1 + t} G(t) \leq \frac{3 \omega}{2(1+t)} \int _{B(t)} |\xi|^2 |\widehat{u}(\xi,t)|^2 \, d \xi + \Vert \nabla u(t) \Vert _{L^2} ^6, \qquad t \geq t_1,
\end{equation}
for some large $t_1 > 0$, where
\begin{align}
  G(t) & = 2 \delta \Vert \nabla \tau (t) \Vert _{L^2} ^2 + \frac{1}{2} \Vert div \, \tau (t) \Vert _{L^2} ^2 + \frac{9 \omega^2}{\omega (1 - \omega)} \Vert \nabla \, div \tau (t) \Vert _{L^2} ^2
         + \Vert \nabla ^2 \tau (t) \Vert _{L^2} ^2 \notag \\ & + \frac{\omega}{2} \Vert \nabla u (t) \Vert _{L^2} ^2 + \frac{9 \omega^3}{\omega (1 - \omega)} \Vert \nabla ^2 u (t) \Vert _{L^2} ^2, \notag
\end{align}
where $\delta > 0$ is a constant. The pointwise estimate in the ball $B(t)$ already obtained leads to

\begin{equation}
\label{eqn:xi-squared-ball}
\int _{B(t)} |\xi|^2 |\widehat{u}(\xi,t)|^2 \, d \xi \leq g^2 (t) \int _{B(t)} |\widehat{u}(\xi,t)|^2 \, d \xi \leq (1 + t) ^{- \left( 1 + \min \left\{   \frac{3}{2} + \min \{  r^{\ast} \left(u_0 \right),  1 +  r^{\ast} \left(\tau_0 \right) \}, \frac{3}{2}
    \right\} \right)},\,\, t>0, 
\end{equation}
and as a  consequence  of the fact that $\Vert \nabla u (t) \Vert _{L^2}$ also obeys the estimate \eqref{eqn:estimate-k-equal-zero},  because of \eqref{eqn:equation59}, \eqref{eqn:f1} and \eqref{eqn:integrating-factor}, we obtain

\begin{equation}
\label{eqn:nabla-six}
\Vert \nabla u(t) \Vert _{L^2} ^6 \leq C (1 + t) ^{- 3 \min \left\{   \frac{3}{2} + \min \{  r^{\ast} \left(u_0 \right),  1 +  r^{\ast} \left(\tau_0 \right) \}, \frac{3}{2}  \right\}}, \quad t>0.
\end{equation}
Using the integrating factor $h(t) = (1 + t)^A$, with 

\begin{displaymath}
    A > \max \left\{ 1 + \min \left\{   \frac{3}{2} + \min \left\{  r^{\ast} \left(u_0 \right), 1 + r^{\ast} \left(\tau_0 \right) \right\}, \frac{3}{2} \right\}, 3 \min \left\{   \frac{3}{2} + \min \{  r^{\ast} \left(u_0 \right),  1 +  r^{\ast} \left(\tau_0 \right) \}, \frac{3}{2}  \right\} - 1 \right\}
\end{displaymath}
through \eqref{eqn:inequality-g} we get

\begin{align}
\frac{d}{dt} \left( (1+t)^A G(t) \right) & \leq  (1+t)^{A-1} (1 + t) ^{- \left( 1 + \min \left\{   \frac{3}{2} + \min \{  r^{\ast} \left(u_0 \right),  1 +  r^{\ast} \left(\tau_0 \right) \}, \frac{3}{2}  \right\} \right)} \notag \\ & + (1+t)^A (1 + t) ^{- 3 \min \left\{   \frac{3}{2} + \min \{  r^{\ast} \left(u_0 \right),  1 +  r^{\ast} \left(\tau_0 \right) \}, \frac{3}{2}  \right\}} \notag 
\end{align}
Integrating we obtain

\begin{align}
\label{eqn:estimate-g}
2 \delta \Vert \nabla \tau (t) \Vert _{L^2} ^2 & + \frac{1}{2} \Vert div \, \tau (t) \Vert _{L^2} ^2 + \frac{9 \omega^2}{\omega (1 - \omega)} \Vert \nabla \, div \tau (t) \Vert _{L^2} ^2 + \Vert \nabla ^2 \tau (t) \Vert _{L^2} ^2 \notag \\ & + \frac{\omega}{2} \Vert \nabla u (t) \Vert _{L^2} ^2 + \frac{9 \omega^3}{\omega (1 - \omega)} \Vert \nabla ^2 u (t) \Vert _{L^2} ^2 \notag \\ & \leq C (1 + t) ^{- \left( 1 + \min \left\{   \frac{3}{2} + \min \{  r^{\ast} \left(u_0 \right),  1 +  r^{\ast} \left(\tau_0 \right) \}, \frac{3}{2}  \right\} \right)} \notag \\ & +  (1 + t) ^{-\left( 3  \min \left\{   \frac{3}{2} + \min \{  r^{\ast} \left(u_0 \right),  1 +  r^{\ast} \left(\tau_0 \right) \}, \frac{3}{2}  \right\} - 1 \right)}, \quad t>0.
\end{align}
Suppose that the first term in the right hand side of \eqref{eqn:estimate-g} is slower. Then \eqref{eqn:nabla-six} becomes

\begin{displaymath}
\Vert \nabla u(t) \Vert _{L^2} ^6 \leq C (1 + t) ^{- 3 \left(1 +  \min \left\{   \frac{3}{2} + \min \{  r^{\ast} \left(u_0 \right),  1 +  r^{\ast} \left(\tau_0 \right) \}, \frac{3}{2}  \right\} \right)}, \quad t>0,
\end{displaymath}
which leads, following the computations above, to 

\begin{align}
\label{eqn:estimate-first-derivative}
2 \delta \Vert \nabla \tau (t) \Vert _{L^2} ^2 & + \frac{1}{2} \Vert div \, \tau (t) \Vert _{L^2} ^2 + \frac{9 \omega^2}{\omega (1 - \omega)} \Vert \nabla \, div \tau (t) \Vert _{L^2} ^2 + \Vert \nabla ^2 \tau (t) \Vert _{L^2} ^2 \notag \\ & + \frac{\omega}{2} \Vert \nabla u (t) \Vert _{L^2} ^2 + \frac{9 \omega^3}{\omega (1 - \omega)} \Vert \nabla ^2 u (t) \Vert _{L^2} ^2 \notag \\ & \leq C (1 + t) ^{- \left( 1 + \min \left\{   \frac{3}{2} + \min \{  r^{\ast} \left(u_0 \right),  1 +  r^{\ast} \left(\tau_0 \right) \}, \frac{3}{2}  \right\} \right)}, \quad t>0.
\end{align}
Further iterations with the same condition lead to no improvement in decay. Now, if the second term in \eqref{eqn:estimate-g} were slower, after a finite number of bootstrap steps the estimate for $\Vert \nabla u(t) \Vert _{L^2} ^6$ would become smaller than the other term in \eqref{eqn:estimate-g}, so we would be in the previouse case again. Hence, we have proved \eqref{eqn:higher-derivatives} for $k = 1$. 

It remains to prove the case of $k=2$. From Hieber, Wen and Zi \cite{MR3909068} we have that

\begin{equation}
\label{eqn:eq-h}
    \frac{d}{dt} H(t) + \frac{4}{1+t} H(t) \leq \frac{36 \, \omega ^3}{(1 - \omega) \omega (1+t)} \int _{B(t)} |\xi|^4 |\widehat{u} (\xi,t)|^2 \, d \xi + C \Vert \nabla ^2 u(t) \Vert _{L^2} ^{\frac{10}{3}},  \quad t \geq t_2
\end{equation}
for some large enough $t_2 > 0$, where 
\begin{displaymath}
  H (t)  = \frac{9 \omega^3}{\omega (1 - \omega)} \Vert \nabla ^2 u \Vert _{L^2} ^2 + \frac{9 \omega^2}{\omega (1 - \omega)} \Vert \nabla div \tau (t) \Vert _{L^2} ^2 + \Vert \nabla ^2 \tau (t)
  \Vert _{L^2} ^2, 
\end{displaymath}
and 
\begin{displaymath}
    B(t) = \left\{\xi \in \RR^3: |\xi|^2 \leq \frac{72 \omega}{\omega (1 - \omega)} (1 + t) ^{- \frac{1}{2}} \right\}.
\end{displaymath}
An argument analogous to that in \eqref{eqn:xi-squared-ball} leads to

\begin{equation*}
%\label{eqn:xi-fourth-ball}
\int _{B(t)} |\xi|^4 |\widehat{u}(\xi,t)|^2 \, d \xi \leq (1 + t) ^{- \left( 2 + \min \left\{   \frac{3}{2} + \min \{  r^{\ast} \left(u_0 \right),
      1 +  r^{\ast} \left(\tau_0 \right) \}, \frac{3}{2}  \right\} \right)}, \quad t>0. 
\end{equation*}
Next,  \eqref{eqn:estimate-first-derivative} implies

\begin{displaymath}
  \Vert \nabla ^2 u(t) \Vert _{L^2} ^{\frac{10}{3}} \leq (1 + t) ^{- \frac{5}{3} \left( 1 + \min \left\{   \frac{3}{2} + \min \{  r^{\ast} \left(u_0 \right),
        1 +  r^{\ast} \left(\tau_0 \right) \}, \frac{3}{2}  \right\} \right)}, \quad t>0.
\end{displaymath}
Using the  integrating factor $k(t) = (1+t)^A$ in \eqref{eqn:eq-h} with

\begin{displaymath}
    A > \max \left\{ 2 + \min \left\{   \frac{3}{2} + \min \{  r^{\ast} \left(u_0 \right),
      1 +  r^{\ast} \left(\tau_0 \right) \}, \frac{3}{2}  \right\}, \frac{5}{3} \min \left\{   \frac{3}{2} + \min \{  r^{\ast} \left(u_0 \right),  1 +  r^{\ast} \left(\tau_0 \right) \}, \frac{3}{2}  \right\} - 1 \right\}
\end{displaymath}
yields

\begin{align}
\frac{d}{dt} \left( (1+t)^A H(t) \right) & \leq  (1+t)^{A-1} (1 + t) ^{- \left( 2 + \min \left\{   \frac{3}{2} + \min \{  r^{\ast} \left(u_0 \right),  1 +  r^{\ast} \left(\tau_0 \right) \}, \frac{3}{2}  \right\} \right)} \notag \\ & + (1+t)^A (1 + t) ^{- \frac{5}{3} \min \left\{   \frac{3}{2} + \min \{  r^{\ast} \left(u_0 \right),  1 +  r^{\ast} \left(\tau_0 \right) \}, \frac{3}{2}  \right\}}, \quad t>0. \notag
\end{align}
Integrating gives

\begin{align}
    \frac{9 \omega^3}{\omega (1 - \omega)} \Vert \nabla ^2 u \Vert _{L^2} ^2 & + \frac{9 \omega^2}{\omega (1 - \omega)} \Vert \nabla div \tau (t) \Vert _{L^2} ^2 + \Vert \nabla ^2 \tau (t) \Vert _{L^2} ^2 \notag \\ & \leq  (1 + t) ^{- \left( 2 + \min \left\{   \frac{3}{2} + \min \{  r^{\ast} \left(u_0 \right),  1 +  r^{\ast} \left(\tau_0 \right) \}, \frac{3}{2}  \right\} \right)} \notag \\ & +  (1 + t) ^{- \left( \frac{5}{3} \min \left\{   \frac{3}{2} + \min \{  r^{\ast} \left(u_0 \right),  1 +  r^{\ast} \left(\tau_0 \right) \}, \frac{3}{2}  \right\} - 1 \right)}. \notag
\end{align}
An argument analogous to that used in the case $k = 1$ leads to 

\begin{align*}
    \frac{9 \omega^3}{\omega (1 - \omega)} \Vert \nabla ^2 u \Vert _{L^2} ^2 & + \frac{9 \omega^2}{\omega (1 - \omega)} \Vert \nabla div \tau (t) \Vert _{L^2} ^2 + \Vert \nabla ^2 \tau (t) \Vert _{L^2} ^2 \notag \\ & \leq  (1 + t) ^{- \left( 2 + \min \left\{   \frac{3}{2} + \min \{  r^{\ast} \left(u_0 \right),  1 +  r^{\ast} \left(\tau_0 \right) \}, \frac{3}{2}  \right\} \right)},
\end{align*}
which implies \eqref{eqn:higher-derivatives} for $k =2$.

Finlly, we prove estimate \eqref{eqn:high-deriv-tau}. The equation for $\tau$ in \eqref{eqn:oldroyd-b} implies

\begin{equation*}
%\label{eqn:better-decay-tau-j=0}
    \frac{1}{2} \frac{d}{dt} \Vert \tau (t) \Vert _{L^2} ^2 + \Vert \tau (t) \Vert _{L^2} ^2 = 2 \omega \int_{\RR^3} \tau D (u) \, dx - \int_{\RR^3} \tau \, u \nabla \tau  \, dx - \int_{\RR^3} \tau g_a (\tau, \nabla u) \, dx.
\end{equation*}
Notice first that  
\begin{equation}
\label{eqn:better-decay-tau-first}
\int_{\RR^3} \tau D (u) \, dx \leq \Vert \tau(t) \Vert _{L^2} \Vert \nabla u \Vert _{L^2} \leq \frac{1}{2} \left(\Vert \tau(t) \Vert _{L^2} ^2 +  \Vert \nabla u \Vert _{L^2} ^2 \right).
\end{equation}
Furthermore, 
\begin{equation}
\label{eqn:better-decay-tau-second}
\int_{\RR^3} \tau \, u \nabla \tau  \, dx \leq \Vert \tau (t) \Vert _{L^3} \Vert u (t) \Vert _{L^6} \Vert \nabla \tau (t) \Vert _{L^2} \leq C \Vert \tau(t) \Vert _{L^2} ^{\frac{1}{2}} \Vert D \tau (t)  \Vert _{L^2} ^{\frac{1}{2}} \Vert \nabla u (t) \Vert _{L^2} \Vert \nabla \tau (t) \Vert _{L^2}
\end{equation}
for some $C>0$, where we used 

\begin{equation}
\label{eqn:inequalities}
\Vert f \Vert _{L^6 (\RR^3)} \leq C \Vert \nabla f \Vert _{L^2 (\RR^3)}, \qquad  \Vert f \Vert  _{L^3 (\RR^3)} \leq C \Vert f \Vert _{L^2 (\RR^3)} ^{\frac{1}{2}} \Vert Df \Vert _{L^2 (\RR^3)} ^{\frac{1}{2}}. 
\end{equation}
Finally, the interpolation inequality $\Vert f \Vert _{L^4} \leq C \Vert f \Vert _{L^2} ^{\frac{1}{4}} \Vert \nabla f \Vert _{L^2} ^{\frac{3}{4}}$ implies
\begin{align}
\label{eqn:better-decay-tau-third}
\int_{\RR^3} \tau g_a (\tau, \nabla u) \, dx & \leq C \Vert \tau ^2 (t) \Vert _{L^2} \Vert \nabla u(t) \Vert _{L^2} = C \Vert \tau (t) \Vert _{L^4} ^2 \Vert \nabla u(t) \Vert _{L^2} \leq \frac{C}{2} \left(\Vert \tau (t) \Vert _{L^4} ^4 +  \Vert \nabla u(t) \Vert _{L^2} ^2 \right) \notag \\ & \leq C \Vert \tau (t) \Vert _{L^2} \Vert \nabla \tau (t) \Vert _{L^2} ^3 + C  \Vert \nabla u(t) \Vert _{L^2} ^2
\end{align}
for some $C>0$. Using \eqref{eqn:higher-derivatives} and keeping the terms with slower terms decay in \eqref{eqn:better-decay-tau-first}, \eqref{eqn:better-decay-tau-second} and \eqref{eqn:better-decay-tau-third}, we obtain

\begin{displaymath}
\frac{1}{2} \left( \frac{d}{dt} \Vert \tau (t) \Vert _{L^2} ^2 + \Vert \tau (t) \Vert _{L^2} ^2 \right) \leq C \Vert \nabla u(t) \Vert _{L^2} ^2 = C (1 + t) ^{- \left( 1 + \min \left\{   \frac{3}{2} + \min \{  r^{\ast} \left(u_0 \right),  1 +  r^{\ast} \left(\tau_0 \right) \}, \frac{3}{2}  \right\} \right)}, \quad t>0, 
\end{displaymath}
which after using Gronwall's inequality leads to \eqref{eqn:high-deriv-tau} for $j = 0$.

In order to prove \eqref{eqn:high-deriv-tau} for  $j = 1$,  we take gradient of the equation for $\tau$  and multiply by $\nabla \tau$ to obtain

\begin{equation*}
    \frac{1}{2} \frac{d}{dt} \Vert \nabla \tau (t) \Vert _{L^2} ^2 + \Vert \tau (t) \Vert _{L^2} ^2 = \int_{\RR^3} \nabla \tau \nabla D (u) \, dx - \int_{\RR^3} \nabla \tau \,  \nabla \left( u \nabla \tau \right)  \, dx - \int_{\RR^3} \nabla \tau \, \nabla g_a (\tau, \nabla u) \, dx.
\end{equation*}
Proceeding as before we obtain 

\begin{equation}
\label{eqn:better-decay-tau-j=1-first}
\int_{\RR^3} \nabla \tau \nabla D (u) \, dx \leq \Vert \nabla \tau(t) \Vert _{L^2} \Vert \nabla ^2 u \Vert _{L^2} \leq \frac{1}{2} \left(\Vert \nabla \tau(t) \Vert _{L^2} ^2 +
  \Vert \nabla^2 u \Vert _{L^2} ^2 \right), \quad t>0,
\end{equation}
\begin{align}
\label{eqn:better-decay-tau-j=1-second}
  \left|\int_{\RR^3} \nabla \tau \,  \nabla \left( u \nabla \tau \right)  \, dx \right| & = \left| \int_{\RR^3} \nabla ^2 \tau \,   u \nabla \tau  \, dx \right| \leq \Vert \nabla^2 \tau (t) \Vert _{L^2} \Vert u (t) \Vert _{L^3} \Vert \nabla \tau (t) \Vert _{L^6} \nonumber \\ & \leq C \Vert u(t) \Vert _{L^2} ^{\frac{1}{2}} \Vert D u (t)  \Vert _{L^2} ^{\frac{1}{2}} \Vert \nabla^2 \tau (t) \Vert _{L^2} ^2, \quad t>0, 
\end{align}
where we again used the embedding $\dot{H}^1 (\RR^3) \subset L^6 (\RR^3)$ and  the fact that $\Vert f \Vert  _{L^3 (\RR^3)} \leq C \Vert f \Vert _{L^2 (\RR^3)} ^{\frac{1}{2}} \Vert Df
\Vert _{L^2 (\RR^3)} ^{\frac{1}{2}} $. Now,
\begin{align}
\label{eqn:better-decay-tau-j=1-third}
    \left| \int_{\RR^3} \nabla \tau \, \nabla g_a (\tau, \nabla u) \, dx \right| & = \left| \int_{\RR^3} \nabla^2 \tau \, g_a (\tau, \nabla u) \, dx \right| \leq C \Vert \nabla^2 \tau (t) \Vert _{L^2} \Vert \tau (t) \Vert _{L^3} \Vert \nabla u (t) \Vert _{L^6} \notag \\ & \leq C \Vert \tau(t) \Vert _{L^2} ^{\frac{1}{2}} \Vert D \tau (t)  \Vert _{L^2} ^{\frac{1}{2}} \Vert \nabla^2 \tau (t) \Vert _{L^2} \Vert \nabla^2 u (t) \Vert _{L^2}, 
\end{align}
As before, we use \eqref{eqn:higher-derivatives} and keep the terms with slower terms decay in \eqref{eqn:better-decay-tau-j=1-first}, \eqref{eqn:better-decay-tau-j=1-second} and \eqref{eqn:better-decay-tau-j=1-third} to obtain
\begin{displaymath}
\frac{1}{2} \left( \frac{d}{dt} \Vert \nabla \tau (t) \Vert _{L^2} ^2 + \Vert \nabla \tau (t) \Vert _{L^2} ^2 \right) \leq C \Vert \nabla^2 \tau(t) \Vert _{L^2} ^2 = C (1 + t) ^{2 \left( 1 + \min \left\{   \frac{3}{2} + \min \{  r^{\ast} \left(u_0 \right),  1 +  r^{\ast} \left(\tau_0 \right) \}, \frac{3}{2}  \right\} \right)}, \quad t>0,
\end{displaymath}
which after using Gronwall's inequality leads to \eqref{eqn:high-deriv-tau} for $j = 1$.% i.e.

The proof of Theorem \ref{general-a} is complete.
\end{proof}

\subsection{Proof of Theorem \ref{thm:error}}

\begin{proof}
For notational convenience we set
\begin{displaymath}
\alpha = \alpha(r^\star(u_0),r^\star(\tau_0)) =  \min \left\{\frac{3}{2}, \frac{3}{2} +  \min \{\,  r^{\ast} \left(u_0 \right),  1 + r^{\ast} \left(\tau_0 \right)\, \}\, \right\} .    
\end{displaymath}
Recall that $ \varepsilon = \tau -2 \omega D(u)$. Taking the time derivative and using \eqref{eqn:oldroyd-b} yields

\begin{displaymath}
\partial_t \varepsilon +  \varepsilon - (1 - \omega) \Delta \varepsilon = - (u\cdot\nabla)\tau - g_a(\tau, \nabla u)  - 2 \omega D \, \mathbb{P} div \, \tau + 2 \omega \, D\mathbb{P} (u \cdot \nabla) u - (1 - \omega) \Delta \tau, \quad t>0.
\end{displaymath}
Multiplication by $\varepsilon$ and integration leads to

\begin{align*}
\frac{1}{2} \frac{d}{dt} \Vert \varepsilon (t) \Vert _{L^2} ^2 +   \Vert \varepsilon (t) \Vert _{L^2} ^2 + (1 - \omega) \Vert \nabla \varepsilon (t) \Vert _{L^2} ^2 & = - \int _{\RR^3} \varepsilon \, (u\cdot\nabla)\tau \, dx - \int _{\RR^3} \varepsilon \,  g_a(\tau, \nabla u) \, dx - 2 \omega \int _{\RR^3} \varepsilon \, D \, \mathbb{P} div \, \tau \, dx \notag \\ & + 2 \omega  \int _{\RR^3} \varepsilon \, D\mathbb{P} (u \cdot \nabla) u \, dx - (1 - \omega) \int _{\RR^3} \varepsilon \, \Delta \tau  \, dx. 
\end{align*}
Notice first that
\begin{align} 
\int _{\RR^3} \varepsilon \, (u\cdot\nabla)\tau \, dx & \leq \Vert \varepsilon (t) \Vert _{L^2} \Vert u (t) \Vert _{L^6} \Vert \nabla \tau (t) \Vert _{L^3} \leq \Vert \varepsilon (t) \Vert _{L^2} \Vert \nabla u (t) \Vert _{L^2} \Vert \nabla \tau (t) \Vert _{L^2} ^{1/2} \Vert \nabla ^2 \tau (t)  \Vert _{L^2} ^{1/2} \notag \\ & \leq C (1+t) ^{- \left(\frac{1}{2} + \frac{\alpha}{2} \right)} \, (1+t) ^{- \left(\frac{1}{2} + \frac{\alpha}{2} \right)} \, (1+t) ^{- \left(\frac{1}{2} + \frac{\alpha}{4} \right)} \, (1+t) ^{- \left(\frac{1}{2} + \frac{\alpha}{4} \right)}  \notag \\ & = C (1+t) ^{- \left( 2 +  \frac{3 \alpha}{2} \right)}, \quad t>0,\notag
\end{align} 
where we have used \eqref{eqn:inequalities} and the decay estimates proved in Theorem \ref{general-a}. Recalling that $g_a$ is a sum of products of $\tau$ and $\nabla u$, we verify that 

\begin{align}
    \int _{\RR^3} \varepsilon \,  g_a(\tau, \nabla u) \, dx & \leq C \Vert \varepsilon (t) \Vert _{L^2} \Vert \tau (t) \Vert _{L^6} \Vert \nabla u(t) \Vert _{L^3} \leq C \Vert \varepsilon (t) \Vert _{L^2} \Vert \nabla \tau (t) \Vert _{L^2} \Vert \nabla u (t) \Vert _{L^2} ^{1/2} \Vert \nabla ^2 u (t)  \Vert _{L^2} ^{1/2} \notag \\ & \leq C (1+t) ^{- \left(\frac{1}{2} + \frac{\alpha}{2} \right)} \, C (1+t) ^{- \left(1 + \frac{\alpha}{2} \right)} \, C (1+t) ^{- \left(\frac{1}{4} + \frac{\alpha}{4} \right)} \, C (1+t) ^{- \left(\frac{1}{2} + \frac{\alpha}{4} \right)} \notag \\ & = 
C (1+t) ^{- \left( \frac{9}{4} + \frac{3 \alpha}{2} \right)}, \quad t>0,\notag
\end{align}
where we used  \eqref{eqn:inequalities} again, as well as the decay estimates from Theorem \ref{general-a}. Noting  that  

\begin{displaymath}
    \int _{\RR^3} \varepsilon \, D \, \mathbb{P} div \, \tau \, dx = - \int _{\RR^3} D \varepsilon \, \mathbb{P} div \, \tau \, dx,
\end{displaymath}
we have 
\begin{displaymath}
%\label{eqn:slow1}
    \int _{\RR^3} \varepsilon \, D\mathbb{P} div \, \tau \, dx \leq C \Vert D \varepsilon (t) \Vert _{L^2} \Vert div \, \tau (t) \Vert _{L^2} \leq C (1+ t) ^{- \left( 2 + \alpha \right)}, \quad t>0.
\end{displaymath}
Next, we observe that 

\begin{displaymath}
\int _{\RR^3} \varepsilon \, D\mathbb{P} (u \cdot \nabla) u \, dx = - \int _{\RR^3} D \varepsilon \, \mathbb{P} (u \cdot \nabla) u \, dx.
\end{displaymath}
Then, we obtain 
\begin{align}
\int _{\RR^3} D \varepsilon \, \mathbb{P} (u \cdot \nabla) u \, dx & \leq C \Vert D \varepsilon (t) \Vert _{L^2} \Vert u (t) \Vert _{L^6} \Vert \nabla u(t) \Vert _{L^3} \leq C \Vert D \varepsilon (t) \Vert _{L^2} \Vert  \nabla u (t) \Vert _{L^2}  \Vert \nabla u (t) \Vert _{L^2} ^{1/2} \Vert \nabla ^2 u (t)  \Vert _{L^2} ^{1/2} \notag \\ & \leq C (1+t) ^{- \left(1 + \frac{\alpha}{2} \right)} \, (1+t) ^{- \left(\frac{1}{2} + \frac{\alpha}{2} \right)} \, (1+t) ^{- \left(\frac{1}{4} + \frac{\alpha}{4} \right)} \, (1+t) ^{- \left(\frac{1}{2} + \frac{\alpha}{4} \right)} = C (1+t) ^{- \left(\frac{9}{4} + \frac{3 \alpha}{2} \right)}, \quad t>0.\notag
\end{align}
Finally, since 

\begin{displaymath}
    \int _{\RR^3} \varepsilon \, \Delta \tau  \, dx  = - \int _{\RR^3} \nabla \varepsilon \, \nabla \tau  \, dx,
\end{displaymath}
we see   that
\begin{equation*}
    \int _{\RR^3} \nabla \varepsilon \, \nabla \tau  \, dx  \leq \Vert \nabla \varepsilon (t) \Vert _{L^2} \Vert \nabla \tau \Vert _{L^2} \leq C \Vert \nabla \tau \Vert _{L^2} ^2 + C \Vert \nabla ^2 u(t) \Vert _{L^2}  \Vert \nabla \tau \Vert _{L^2} \leq C (1+t) ^{- \left(2 + \alpha \right)}, \quad t>0. 
\end{equation*}    
As     
 \begin{displaymath}
     \frac{1}{2} \frac{d}{dt} \Vert \varepsilon (t) \Vert _{L^2} ^2 +   \Vert \varepsilon (t) \Vert _{L^2} ^2  \leq \frac{1}{2} \frac{d}{dt} \Vert \varepsilon (t) \Vert _{L^2} ^2 +   \Vert \varepsilon (t) \Vert _{L^2} ^2 + (1 - \omega) \Vert \nabla \varepsilon (t) \Vert _{L^2} ^2, 
 \end{displaymath}
we obtain that

\begin{displaymath}
    \frac{1}{2} \frac{d}{dt} \Vert \varepsilon (t) \Vert _{L^2} ^2 +   \Vert \varepsilon (t) \Vert _{L^2} ^2 \leq C (1+t) ^{- \left(2 + \alpha \right)}, \quad t>0, 
\end{displaymath}
which after integration leads to our result.

\end{proof}

\section{Proof of Theorem \ref{thm:lower}} 
\begin{proof}
Inserting 
\begin{displaymath}
\mbox{div} \, \tau = \mbox{div} \, \varepsilon + 2 \omega \, \Delta u, 
\end{displaymath}
in the equation for $u$ in \eqref{eqn:oldroyd-b} and applying the Helmholtz projector leads to  the  equation
\begin{displaymath}
    \partial_t u+ \mathbb{P} \left((u \ \cdot \nabla)u \right) = (1+\omega) \Delta u + \mathbb{P} \left( div \, \varepsilon \right).
\end{displaymath}
The regularity of our solutions allows us to write

\begin{displaymath}
u(t) = e^{(1 + \omega)t \Delta} u_0 - \int _0 ^ t e^{(1 + \omega)(t - s) \Delta} \mathbb{P} \left((u \ \cdot \nabla)u \right) (s) \, ds \, - \int _0 ^ t e^{(1 + \omega)(t-s) \Delta} \mathbb{P} \left( div \, \varepsilon \right) \, ds.
\end{displaymath}
After applying $\nabla$ and passing the derivative onto the kernel in the integral terms, we obtain

\begin{align*}
    \left\Vert \nabla u (t) -  \nabla \left( e^{(1 + \omega)t \Delta} u_0 \right) \right\Vert _{L^2} ^2 & \leq C \int_0 ^ t \Vert \nabla^ 2 e^{(1 + \omega)(t-s) \Delta} \mathbb{P} (u \otimes u) (s) \Vert _{L^2} ^2 \, ds \\ & + C \int_0 ^ t \Vert \nabla^ 2 e^{(1 + \omega)(t-s) \Delta} \mathbb{P} \varepsilon (s) \Vert _{L^2} ^2ds, \quad t>0. 
\end{align*}
We then see that 

\begin{align}
\label{eqn:estimate-nonlinear-part-du}
    \int_0 ^ t \Vert \nabla^ 2 e^{(1 + \omega)(t-s) \Delta} \mathbb{P} \,  (u \otimes u) (s) \Vert _{L^2} ^2 \, ds & \leq  C \int_0 ^ t \Vert \nabla^ 2 e^{(1 + \omega)(t-s) \Delta} \Vert _{L^ 2} ^ 2 \Vert  \,  (u \otimes u)\, (s) \Vert _{L^1} ^2 \, ds \notag \\ & \leq  C \int_0 ^ t \Vert \nabla^ 2 e^{(1 + \omega)(t-s) \Delta} \Vert _{L^ 2} ^ 2 \Vert u (s) \Vert _{L^2} ^4 \, ds \notag \\ & \leq  C \int _0 ^ t (t - s) ^ {- \frac{7}{2}} (1+s) ^{- 2 \left(\min \left\{   \frac{3}{2} + \min \{  r^{\ast} \left(u_0 \right),  1 +  r^{\ast} \left(\tau_0 \right) \}, \frac{3}{2}  \right\} \right)}  \, ds \notag \\ & \leq C  \int _0 ^ t (t - s) ^ {- \frac{7}{2}} (1+s) ^{- 2 \left( \frac{3}{2} +   r^{\ast} \left(u_0 \right) \right)}  \, ds \leq C \,  t^ {- \frac{5}{2}}, \quad t>0,
\end{align}
where we used the  assumption $r^{\ast} (u_0) \leq 1 + r^{\ast} (\tau _0)$, $r^{\ast} (u_0) \leq 0$ as well as the heat kernel estimate

\begin{displaymath}
    \Vert \nabla^ k e^ {t \Delta} f \Vert _{L^ q} \leq C t^ {- \frac{1}{2} \left(k +  n \left(\frac{1}{p} - \frac{1}{q} \right) \right)} \Vert f \Vert _{L^ p}, \quad t>0, \quad 1 < p \leq q \leq \infty. 
\end{displaymath}
Given $t > 1$, \eqref{eqn:estimate-nonlinear-part-du}  and  Theorem \ref{thm:error} imply  
\begin{displaymath}
    \Vert \nabla u (t) \Vert _{L^ 2} ^ 2   \geq \Vert \nabla e^ {t \Delta} u_0 \Vert _{L^ 2} ^ 2 - \Vert \nabla u (t) - \nabla e^ {t \Delta} u_0 \Vert _{L^ 2} ^ 2 \geq C (1 + t) ^ {- \left( \frac{5}{2} + r^{\ast} \right)} - C (1 + t) ^ {- \frac{5}{2}}  \geq C (1 + t) ^ {- \left( \frac{5}{2} + r^{\ast} \right)}, \notag
\end{displaymath}
and also

\begin{displaymath}
    \Vert \tau (t) \Vert _{L^ 2} ^ 2  \geq \Vert 2 \omega \, \nabla u(t) \Vert _{L^ 2} ^ 2 - \Vert \tau (t) - 2 \omega \, \nabla u(t) \Vert _{L^ 2} ^ 2 \geq  C (1 + t) ^ {- \left( \frac{5}{2} + r^{\ast} \right)} - C (1 + t) ^ {- \frac{7}{2}} \geq C (1 + t) ^ {- \left( \frac{5}{2} + r^{\ast} \right)}, \notag
\end{displaymath}
The proof of estimate \eqref{eqn:lower-bounds-for-final-estimate-second-case}, which corresponds to $1 + r^{\ast} (\tau _0) \leq r^{\ast} (u_0)$ and $1 + r^{\ast} (\tau _0) \leq 0$,  follows along the same lines and is hence omitted. 
\end{proof}

\textbf{Acknowledgments}
M. Hieber would like to thank the German Science Foundation DFG for support through the Research Unit FOR5528. T.H. Nguyen acknowledges support by DFG (Research Unit FOR5528) during his visit in TU Darmstadt.   C.J.  Niche acknowledges support from CNPq, Projeto Universal 408751/2023-1. C. F.  Perusato was partially supported by Cnpq Projeto Universal 421573/2023-6, Propesq - UFPE, 05/2023 (Qualis A) and Cnpq Bolsa PQ, Brazil 441618/2023-5. He is also grateful for the warm hospitality and the inspiring academic atmosphere at Technische Universität Darmstadt, where part of this work was further developed and completed.

\end{document}